\newtheorem{theorem}{Theorem}
\newtheorem{corollary}[theorem]{Corollary}
\newtheorem{definition}[theorem]{Definition}
\long\def\symbolfootnote[#1]#2{\begingroup
\def\thefootnote{\fnsymbol{footnote}}\footnote[#1]{#2}\endgroup}
\newcommand{\sg}{\sigma}
\newcommand{\ep}{\epsilon}
\newcommand{\cref}[1]{Corollary \ref{corollary:#1}}
\newcommand{\red}[1]{\mathrm{red}(#1)}
\newcommand{\tmch}[1]{\text{$\tau$-$\mathrm{mch}$}(#1)}
\newcommand{\umch}[1]{\text{$u$-$\mathrm{mch}$}(#1)}
\title{On a pattern avoidance condition for the
wreath product of cyclic groups with symmetric groups.}
\author{
Sergey Kitaev\footnote{The work presented here was supported by grant no. 090038011 from the Icelandic Research Fund.}\\
\small The Mathematics Institute\\[-0.8ex]
\small School of Computer Science\\[-0.8ex]
\small Reykjav\'{i}k University\\[-0.8ex]
\small IS-103 Reykjav\'{i}k, Iceland\\[-0.8ex]
\small \texttt{sergey@ru.is}
\and
Jeffrey Remmel\footnote{Partially supported by NSF grant DMS 0654060.} \\
\small Department of Mathematics\\[-0.8ex]
\small University of California, San Diego\\[-0.8ex]
\small La Jolla, CA 92093-0112. USA\\[-0.8ex]
\small \texttt{remmel@math.ucsd.edu}
\and
Manda Riehl\footnote{Supported by UWEC's Office of Research and Sponsored Programs.}\\
\small Department of Mathematics\\[-0.8ex]
\small University of Wisconsin, Eau Claire\\[-0.8ex]
\small Eau Claire, WI 54702-4004 USA\\[-0.8ex]
\small \texttt{riehlar@uwec.edu}
\and
}
\date{\small Submitted: Date 1;  Accepted: Date 2;
 Published: Date 3.\\
\small MR Subject Classifications: 05A15, 05E05}
\begin{document}
\maketitle

\begin{abstract}
\noindent

In this paper, we extend, to a non-consecutive case, the study of
the pattern matching condition on the wreath product $C_k \wr S_n$
of the cyclic group $C_k$ and the symmetric group $S_n$ initiated
in~\cite{KNRR}. The main focus of our paper is (colored) patterns of
length 2, although a number of enumerative results for longer
patterns are also presented. A new non-trivial bijective
interpretation for the Catalan numbers is found, which is the number
of elements in $C_2 \wr S_n$ bi-avoiding simultaneously (1-2,0~0)
and (1-2,0~1).

\end{abstract}

\section{Introduction}

The goal of this paper is to continue the study of pattern matching
conditions on the wreath product $C_k \wr S_n$ of the cyclic group
$C_k$ and the symmetric group $S_n$ initiated in~\cite{KNRR}. $C_k
\wr S_n$ is the group of $k^nn!$ signed permutations where there are
$k$ signs, $1=\omega^0$, $\omega$, $\omega^2$, $\ldots$,
$\omega^{k-1}$ where $\omega$ is a primitive $k$-th root of unity.
We can think of the elements $C_k \wr S_n$ as pairs $\gamma =
(\sg,\epsilon)$ where $\sg = \sg_1 \cdots \sg_n \in S_n$ and $\ep
=\ep_1 \cdots \ep_n \in \{1,\omega,\ldots, \omega^{k-1}\}^n$. For
ease of notation, if $\epsilon = (\omega^{w_1},\omega^{w_2}, \ldots,
\omega^{w_n})$ where $w_i \in \{0, \ldots, k-1\}$ for $i =1, \ldots,
n$, then we simply write $\gamma = (\sg,w)$ where $w = w_1 w_2
\cdots w_n$. Moreover, we think of the elements of $w = w_1 w_2
\cdots w_n$ as the {\em colors} of the corresponding elements of the
underlying permutation $\sg$.

Given a sequence $\sg = \sg_1 \cdots \sg_n$ of distinct integers,
let $\red{\sg}$ be the permutation found by replacing the
$i^{\textrm{th}}$ largest integer that appears in $\sg$ by $i$.  For
example, if $\sg = 2~7~5~4$, then $\red{\sg} = 1~4~3~2$.  Given a
permutation $\tau$ in the symmetric group $S_j$, define a
permutation $\sg = \sg_1 \cdots \sg_n \in S_n$ to have a {\em
$\tau$-match at place $i$} provided $\red{\sg_i \cdots \sg_{i+j-1}}
= \tau$.  Let $\tmch{\sg}$ be the number of $\tau$-matches in the
permutation $\sg$.  Similarly, we say that $\tau$ {\em occurs} in
$\sg$ if there exist $1 \leq i_1 < \cdots < i_j \leq n$ such that
$\red{\sg_{i_1} \cdots \sg_{i_j}} = \tau$.  We say that $\sg$ {\em
avoids} $\tau$ if there are no occurrences of $\tau$ in $\sg$.

We can define similar notions for words over a finite alphabet $[k]=
\{0,1, \ldots, k-1\}$. Given a word  $w = w_1 \cdots w_n \in [k]^*$,
let $\red{w}$ be the word found by replacing the $i^{\textrm{th}}$
largest integer that appears in $w$ by $i-1$.  For example, if $w =
2~7~2~4~7$, then $\red{w} = 0~2~0~1~2$.  Given a word $u \in [k]^j$
such that $\red{u}=u$, define a word $w \in [k]^n$ to have a {\em
$u$-match at place $i$} provided $\red{w_i \cdots w_{i+j-1}} = u$.
Let $\umch{w}$ be the number of $u$-matches in the word  $w$.
Similarly, we say that $u$ {\em occurs} in a word $w$ if there exist
$1 \leq i_1 < \cdots < i_j \leq n$ such that $\red{w_{i_1} \cdots
w_{i_j}} = u$. We say that $w$ {\em avoids} $u$ if there are no
occurrences of $u$ in $w$.

There are a number of papers on pattern matching and pattern
avoidance in $C_k\wr S_n$ (see~\cite{E,KNRR,Man,Man1,MW}).
For example, the
following pattern matching condition was studied in \cite{Man, Man1,MW}.
\begin{definition}\label{def1}
\begin{enumerate}
\item We say that an element $(\tau,u) \in C_k \wr S_j$ {\bf occurs} in
an element $(\sg,w) \in C_k \wr S_n$ if there are $1 \leq i_1 < i_2
< \cdots < i_j \leq n$ such that $\red{\sg_{i_1} \ldots \sg_{i_j}} =
\tau$  and $w_{i_p}=u_p$ for $p= 1, \ldots, j$.

\item We say that an element
$(\sg,w) \in C_k \wr S_n$ {\bf avoids} $(\tau,u) \in C_k \wr S_j$ if
there are no occurrences of $(\tau,u)$ in $(\sg,w)$.

\item If $(\sg,w) \in C_k \wr S_n$  and $(\tau,u) \in C_k \wr S_j$, then
we say that there is a {\bf $(\tau,u)$-match in $(\sg,w)$ starting
at position $i$} if $\red{\sg_{i} \sg_{i+1} \ldots \sg_{i+j-1}} =
\tau$ and $w_{i+p-1} = u_{p}$ for $p =1, \ldots, j$.
\end{enumerate}
\end{definition}

\noindent That is, an occurrence or match of $(\tau,u) \in C_k \wr
S_j$ in an element $(\sg,w) \in C_k \wr S_n$ is just an ordinary
occurrence or match of $\tau$ in $\sg$ where the corresponding signs
agree exactly. For example, Mansour \cite{Man1} proved via recursion
that for any $(\tau, u) \in C_k \wr S_2$, the number of
$(\tau,u)$-avoiding elements in $C_k \wr S_n$ is $\sum_{j=0}^n
j!(k-1)^j \binom{n}{j}^2$.  This generalized a result of Simion
\cite{Sim} who proved the same result for the hyperoctrahedral group
$C_2 \wr S_n$. Similarly, Mansour and West \cite{MW} determined the
number of permutations in $C_2 \wr S_n$ that avoid all possible 2
and 3 element set of patterns of elements of $C_2 \wr S_2$.  For
example, let $K_n^1$ be the number of $(\sg,\epsilon) \in C_2 \wr
S_n$ that avoid all the patterns in the set
$\{(1~2,0~0),(1~2,0~1),(2~1,1~0)\}$, $K_n^2$ be the number of
$(\sg,\epsilon) \in C_2 \wr S_n$ that avoid all the patterns in the
set $\{(1~2,0~1),(1~2,1~0),(2~1,0~1)\}$, and $K_n^3$ be the number
of $(\sg,\epsilon) \in C_2 \wr S_n$ that avoid all the patterns in
the set $\{(1~2,0~0),(1~2,0~1),(2~1,0~0)\}$. They proved that
\begin{eqnarray*}
K_n^1 &=& F_{2n+1}, \\
K_n^2 &=&   n! \sum_{j=0}^n \binom{n}{j}^{-1}, \ \mbox{and} \\
K_n^3 &=& n! +n!\sum_{j=1}^n \frac{1}{j}
\end{eqnarray*}
where $F_n$ is the $n$-th Fibonacci number.

In this paper, we shall consider the following pattern matching
conditions which were first considered in \cite{KNRR}.
\begin{definition}\label{def2}
Suppose that $(\tau,u) \in C_k \wr S_j$ and
$\red{u} =u$.
\begin{enumerate}
\item We say that $(\tau,u)$ {\bf bi-occurs} in
$(\sg,w) \in C_k \wr S_n$ if there are $1 \leq i_1 < i_2 < \cdots <
i_j \leq n$ such that $\red{\sg_{i_1} \cdots \sg_{i_j}} = \tau$  and
$\red{w_{i_1} \cdots w_{i_j}} = u$.

\item We say that an element
$(\sg,w) \in C_k \wr S_n$ {\bf bi-avoids} $(\tau,u)$ if there are no
bi-occurrences $(\tau,u)$ in $(\sg,w)$.

\item  We say that there is a {\bf $(\tau,u)$-bi-match in $(\sg,w) \in
C_k \wr S_n$ starting at
position $i$} if \\
$\red{\sg_{i} \sg_{i+1} \ldots \sg_{i+j-1}} = \tau$ and $\red{w_{i}
w_{i+1} \ldots w_{i+j-1}} = u$.
\end{enumerate}
\end{definition}

One can easily extend these notions to sets of elements of $C_k \wr
S_j$.  That is, suppose that $\Upsilon \subseteq C_k \wr S_j$ is
such that every $(\tau,u) \in \Upsilon$ has the property that
$\red{u} =u$. Then $(\sg,w)$ has an $\Upsilon$-bi-match at place $i$
provided $(\red{\sg_i \cdots \sg_{i+j-1}},\red{w_i \cdots
w_{i+j-1}}) \in \Upsilon$, $(\sg,w)$ has a bi-occurrence of
$\Upsilon$ if there exists $1 \leq i_1 < \cdots < i_j \leq n$ such
that $(\red{\sg_{i_1} \ldots \sg_{i_j}},\red{w_{i_1} \ldots
w_{i)j}}) \in \Upsilon$, and $(\sg,w)$ bi-avoids $\Upsilon$ if there
is no bi-occurrence of $\Upsilon$ in $(\sg,w)$.

We call the pair $(\tau,u)$ in Definition~\ref{def2} a {\em
pattern}. Moreover, to distinguish patterns in case of bi-occurrences
and bi-matches, we use for the former one dashes between the elements
of $\tau$. Thus, absence of a dash between any consecutive pair of
elements, say $x$ and $y$, in $\tau$ means that the elements in
$(\sg,w) \in C_k \wr S_n$ corresponding to $x$ and $y$ are adjacent.
For example, the pattern (1~2,0~0) bi-occurs in
$(\sg,w)=(1~3~2~4,1~2~2~2)$ at position $3$, whereas the pattern
(1-2,0~0) bi-occurs twice in $(\sg,w)$ additionally involving the
elements $\sg_3$ and $\sg_4$.

There are a number of natural maps which show that the problem of
finding the distribution of bi-matches or bi-occurrences for various
patterns is the same. That is, for any  $\sg = \sg_1 \cdots \sg_n
\in S_n$, we define the reverse of $\sg$, $\sg^r$, and the
complement of $\sg$, $\sg^c$, by
\begin{eqnarray*}
\sg^r &=& \sg_n \sg_{n-1} \cdots \sg_1 \ \mbox{and}\\
\sg^c &=& (n+1 -\sg_1) \cdots (n+1-\sg_n).
\end{eqnarray*}
Similarly, if $w =w_1 \cdots w_n \in [k]^n$, then we define the
reverse of $w$, $w^r$, and the complement of $w$, $w^c$, by
\begin{eqnarray*}
w^r &=& w_n w_{n-1} \cdots w_1 \ \mbox{and}\\
w^c &=& (k-1 -w_1) \cdots (k-1-w_n).
\end{eqnarray*}
We can then
consider maps $\phi_{a,b}:C_k \wr S_n \rightarrow C_k \wr S_n$ where
$\phi_{a,b}((\sg,w)) = (\sg^a,w^b)$ for $a,b \in \{r,c\}$. It
is easy to see that $(\sg,\tau)$ has a $(\tau,w)$  bi-match or bi-occurrence
if and only $(\sg^a,w^b)$ has a bi-match or bi-occurrence of
$(\tau^a,u^b)$.

Study of ($\Upsilon$-)bi-matches for patterns of length 2, i.e.
where $(\tau,u) \in C_k \wr S_2$ was the main focus of~\cite{KNRR}.
Such bi-matches are closely related to the analogue of rises and
descents in $C_k \wr S_n$ where we compare pairs using the product
order. That is, instead of thinking of an element of $C_k \wr S_n$
as a pair $(\sg_1 \cdots \sg_n,w_1 \cdots w_n)$, we can think of it
as a sequence of pairs $(\sg_1,w_1) (\sg_2,w_2) \cdots (\sg_n,w_n)$.
We then define a partial order on such pairs by the usual product
order. That is, $(i_1,j_1) \leq (i_2,j_2)$ if and only if $i_1 \leq
i_2$ and $j_1 \leq j_2$.

The outline of this paper is as follows. In section 2, we shall
consider bi-avoidance for patterns of length 2. Up to equivalence of
the maps induced by complementation and reversal, there are only 2
classes of patterns, namely those which are equivalent to (1-2,0~0)
and those that are equivalent to (1-2,0~1).  For patterns of the
form $(\tau,0^j)$, we derive formulas for the number of elements of
$C_k \wr S_n$ which bi-avoid $(\tau,0^j)$ from the formulas for the
number of elements of $S_n$ which avoid $\tau$, or the distribution
of the number of bi-occurrences of $(\tau,0^j)$ in elements of $C_k
\wr S_n$ from the corresponding formulas for the distribution of
occurrences of $\tau$ in $S_n$. For the pattern (1-2,0~1), we can
only find a formula for the number of elements of $C_2 \wr S_n$
which bi-avoid $(\tau,u)$. In section 3, we shall derive formulas
for the number of elements of $C_k \wr S_n$ which bi-avoid various
sets of patterns of length 2. In particular, we prove bijectively
(see Theorem~\ref{cat}) that the number of permutations in $C_2 \wr
S_n$ simultaneously bi-avoiding (1-2,0~0) and (1-2,0~1) is given by
the $(n+1)$-th Catalan number $C_{n+1}=\frac{1}{n+2}{2n+2 \choose
n+1}$.

\section{Bi-avoiding patterns}

In this section, we shall consider bi-avoidance with respect to patterns
of length 2. Up to the equivalence induced by the reversal and
complement maps for permutations and words, we need only consider
2 patterns, (1-2,0~0) and (1-2,0~1).

We start by considering the pattern $(1\mbox{-}2,0~0)$.

\begin{theorem}\label{mult} The number of permutations in $C_k \wr S_n$ bi-avoiding (1-2,0~0) is
given by $$\sum_{\begin{tiny}\begin{array}{c} i_1+\cdots +i_k=n\\
i_1\geq 0,\ldots,i_k\geq 0\end{array}\end{tiny}}{n\choose
i_1,\ldots,i_k}^2.$$\end{theorem}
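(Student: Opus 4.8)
The plan is to translate the bi-avoidance condition into a statement about color classes and then count directly.

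First I would unwind the definitions. Since $(1\mbox{-}2,0~0)$ carries a dash, we seek a bi-occurrence rather than a bi-match, so we look for positions $1 \le i_1 < i_2 \le n$ with $\red{\sg_{i_1}\sg_{i_2}} = 12$ and $\red{w_{i_1}w_{i_2}} = 00$. The condition $\red{\sg_{i_1}\sg_{i_2}} = 12$ means exactly $\sg_{i_1} < \sg_{i_2}$, and $\red{w_{i_1}w_{i_2}} = 00$ means exactly $w_{i_1} = w_{i_2}$ (two equal letters reduce to $00$, two distinct letters to $01$ or $10$). Hence $(\sg,w)$ bi-avoids $(1\mbox{-}2,0~0)$ if and only if no two positions $i_1 < i_2$ share a color while satisfying $\sg_{i_1} < \sg_{i_2}$. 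Equivalently, \emph{for each color $c \in \{0,1,\ldots,k-1\}$, the values of $\sg$ read across the positions colored $c$, from left to right, are strictly decreasing.}

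Next I would set up a bijection indexed by the color distribution. Fix nonnegative $i_1,\ldots,i_k$ with $i_1+\cdots+i_k = n$, where $i_c$ counts the positions receiving color $c$. A bi-avoiding element with this distribution is specified by two independent pieces of data: (a) which positions carry each color, i.e.\ a partition of $\{1,\ldots,n\}$ into labeled blocks $P_1,\ldots,P_k$ of sizes $i_1,\ldots,i_k$; and (b) which values belong to each color, i.e.\ a partition of $\{1,\ldots,n\}$ into labeled blocks $V_1,\ldots,V_k$ of the same sizes. The key point is that once $P_c$ and $V_c$ are fixed for every color, the decreasing constraint from the first step forces a \emph{unique} placement of the values of $V_c$ into the positions of $P_c$, namely in decreasing order. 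Conversely, any bi-avoiding element recovers its data (a) and (b) by recording, for each color, its set of positions and the set of values appearing there.

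Finally I would count. The number of choices in (a) is the multinomial coefficient ${n \choose i_1,\ldots,i_k}$, and likewise the number of choices in (b) is ${n \choose i_1,\ldots,i_k}$; as these choices are independent and the placement is then forced, the number of bi-avoiding elements with the given color distribution is ${n \choose i_1,\ldots,i_k}^2$. Summing over all compositions $i_1+\cdots+i_k = n$ with nonnegative parts yields the claimed formula.

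The one step requiring care — and really the only nontrivial one — is confirming that the reconstruction is well defined and invertible: placing each $V_c$ in decreasing order across $P_c$ must assemble into a genuine permutation $\sg$ of $\{1,\ldots,n\}$ together with its coloring $w$, with distinct data $(a),(b)$ giving distinct elements. This holds because the $P_c$ partition the positions and the $V_c$ partition the values, so the placements never conflict and combine into a bona fide element of $C_k \wr S_n$; the forced decreasing order is precisely the bi-avoidance criterion established in the first step, so the map lands exactly in the set being counted.
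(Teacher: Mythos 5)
Your proof is correct and follows essentially the same route as the paper's: both reduce bi-avoidance of $(1\mbox{-}2,0~0)$ to the observation that the values occupying each color class must appear in strictly decreasing order, then count by independently choosing the set of positions and the set of values for each color, each in ${n \choose i_1,\ldots,i_k}$ ways, with the placement then forced. Your write-up merely makes explicit two points the paper leaves implicit, namely that $\red{w_{i_1}w_{i_2}}=00$ is equivalent to $w_{i_1}=w_{i_2}$ and that the forced decreasing placement yields a well-defined bijection.
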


\begin{proof} We first observe that elements of different colors are independent in permutations bi-avoiding (1-2,0~0), meaning that no
two elements with different colors can form a prohibited
configuration. Thus, assuming we have $i_{j+1}$ elements of color
$j$, $j=0,\ldots,k-1$, we can choose how to place these colors to
form a word $w$ in ${n\choose i_1,\ldots,i_k}$ ways. Then we can
choose the sets of elements $C_0, \ldots, C_{k-1}$ from
$\{1,\ldots,n\}$ which will correspond to the colors $0,1, \ldots,
k-1$ in $\sg$  in ${n\choose i_1,\ldots,i_k}$ ways. Finally, in
order to construct $(\sg,w)$ which bi-avoids the prohibited pattern,
we must place the elements of $C_i$ in the positions which are
colored $i$ in  decreasing order.
\end{proof}

The proof of Theorem~\ref{mult} suggests an obvious generalization
for patterns of arbitrary length with all possible dashes whose
elements are all colored by 0:

\begin{theorem}\label{mult1} Let $p$ be a pattern containing dashes in all places.
The number of permutations in $C_k \wr S_n$ bi-avoiding
($p$,0~$\cdots$0) is
given by $$\sum_{\begin{tiny}\begin{array}{c} i_1+\cdots +i_k=n\\
i_1\geq 0,\ldots,i_k\geq 0\end{array}\end{tiny}}A_{i_1}A_{i_2}\cdots
A_{i_k}{n\choose i_1,\ldots,i_k}^2$$ where $A_i$ is the number of
permutations in $S_i$ avoiding $p$. \end{theorem}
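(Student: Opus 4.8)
The plan is to mirror the structure of the proof of Theorem~\ref{mult}, generalizing the one place where the all-zero color forced a decreasing arrangement. The key observation, exactly as before, is that elements of different colors are independent: since the forbidden pattern $(p,0~\cdots~0)$ is monochromatic (all its letters are colored $0$, and $\red{0\cdots 0} = 0\cdots 0$), a bi-occurrence can only be formed by a collection of entries that all share the same color. Hence the color word $w$ and the underlying permutation $\sg$ interact only through which positions receive which color.

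I would therefore count in three stages. First, fix a composition $i_1+\cdots+i_k = n$ with each $i_j \geq 0$, where $i_{j+1}$ records the number of entries of color $j$. The colors can be arranged into a word $w$ in $\binom{n}{i_1,\ldots,i_k}$ ways. Second, partition the value set $\{1,\ldots,n\}$ into blocks $C_0,\ldots,C_{k-1}$ of the prescribed sizes, assigning the values to be used by each color; this is again a choice of $\binom{n}{i_1,\ldots,i_k}$ ways. Third — and this is the only substantive change from Theorem~\ref{mult} — for each color $j$ we must place the $i_{j+1}$ chosen values of $C_j$ into the $i_{j+1}$ positions colored $j$. In Theorem~\ref{mult} the pattern was $1\mbox{-}2$, so bi-avoidance forced each color's entries into strictly decreasing order, giving a unique arrangement. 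Here, within the subsequence of positions of a single color $j$, the relative order of the entries must avoid the permutation pattern $p$ (read off after reduction), and by definition there are $A_{i_{j+1}}$ such arrangements of a set of size $i_{j+1}$.

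The main point to verify carefully is precisely that these monochromatic avoidance conditions are independent and jointly sufficient. For the forward direction, any monochromatic subsequence that reduces to $p$ would be a bi-occurrence of $(p,0\cdots0)$, so each color-$j$ subsequence must $p$-avoid; conversely, since $(p,0\cdots0)$ requires all matched colors to be equal, no bi-occurrence can straddle two different colors, and thus avoiding $p$ within every color class is enough to bi-avoid $(p,0\cdots0)$ globally. This uses that $p$ ``containing dashes in all places'' makes it a classical (non-consecutive) occurrence condition, so that the count of single-color arrangements is genuinely $A_{i_{j+1}}$ regardless of where in $\sg$ the positions of color $j$ sit. The factor $A_{i_1}A_{i_2}\cdots A_{i_k}$ then records the product over colors of the number of $p$-avoiding relative orders, and multiplying by the two multinomial factors and summing over all compositions yields the claimed formula.

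The hardest part to state cleanly is the reduction step for the third stage: one must argue that the number of ways to fill the color-$j$ positions with the set $C_j$ so as to avoid $p$ depends only on the size $i_{j+1}$, not on which specific values or positions are involved. This follows because avoidance of a classical pattern is invariant under order-isomorphism, so one may identify the chosen values with $\{1,\ldots,i_{j+1}\}$ and the chosen positions with $1,\ldots,i_{j+1}$, reducing to the definition of $A_{i_{j+1}}$; I would spell this out but expect no genuine obstacle beyond bookkeeping.
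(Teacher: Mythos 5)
Your proposal is correct and follows essentially the same approach as the paper, which proves Theorem~\ref{mult1} by the same three-stage count as Theorem~\ref{mult} (choose the color word, choose the value sets, arrange each color class) with the decreasing-order placement replaced by the $A_i$ ways to arrange each monochromatic class. You simply spell out in more detail the independence of color classes and the order-isomorphism invariance that the paper leaves implicit.
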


\begin{proof} A proof here is essentially the same as the proof of Theorem~\ref{mult}, except we can place $i$ elements of a permutation in $C_k \wr S_n$ of the
same color in any of $A_i$ ways. \end{proof}

It is well known that the number of $n$-permutations avoiding any
pattern of length 3 with dashes everywhere is given by the {\em
$n$-th Catalan number} $C_n=\frac{1}{n+1}{2n \choose n}$. As a
corollary to Theorem~\ref{mult1}, we have that for any pattern $p$
of length 3 with two dashes, the number of permutations in $C_k \wr
S_n$ bi-avoiding ($p$,0~0~0) is given by $$\sum_{\begin{tiny}\begin{array}{c} i_1+\cdots +i_k=n\\
i_1\geq 0,\ldots,i_k\geq 0\end{array}\end{tiny}}\frac{{2i_1 \choose
i_1}{2i_2 \choose i_2}\ldots {2i_k \choose
i_k}}{(i_1+1)(i_2+1)\cdots (i_k+1)}{n\choose i_1,\ldots,i_k}^2.$$

One can generalize even further Theorems~\ref{mult} and~\ref{mult1},
by talking on distribution of patterns. Indeed, assuming we know the
number $A_{i,j}$ of $i$-permutations containing $j$ occurrences of a
pattern $p$ (with dashes everywhere), we can write down the number
of permutations in $C_k \wr S_n$ with $j$ bi-occurrence of
($p$,0~$\cdots$0):
$$\sum_{\begin{tiny}\begin{array}{c} i_1+\cdots +i_k=n\\j_1+\cdots
+j_k=j\\ i_1\geq 0,\ldots,i_k\geq 0\\ j_1\geq 0,\ldots,j_k\geq
0\end{array}\end{tiny}}A_{i_1,j_1}A_{i_2,j_2}\cdots
A_{i_k,j_k}{n\choose i_1,\ldots,i_k}^2.$$ For example, the
distribution of the pattern 1-2 is the same as the distribution of
{\em inversions} in permutations (which coincides with the
distribution of 2-1), so one can extract the numbers $A_{i,j}$ as
the coefficients to $q^j$ in $\prod_{m=1}^{i}(1+q+\cdots +q^{m-1})$
and substitute them in the last formula to get the distribution of
the number of bi-occurrences of (1-2,0~0) in $C_k \wr S_n$.

Next we consider the pattern (1-2,0~1) in the case where $k=2$.

The number of permutations in $C_2 \wr S_n$ bi-avoiding (1-2,0~1) is
shown in~\cite[page 19]{Sim} to be equal to $\sum_{j=0}^{n}j!{n
\choose j}^2$. However, in Theorem~\ref{pat2} below we provide an
independent derivation of the exponential generating function in
this case.

\begin{theorem}\label{pat2} The exponential generating function for the number of permutations in $C_2 \wr S_n$ bi-avoiding the pattern
(1-2,0~1) is given by
$$A(x)=\frac{e^{\frac{x}{1-x}}}{1-x}.$$\end{theorem}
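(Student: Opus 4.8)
The plan is to establish the claimed EGF by first finding the number of permutations in $C_2 \wr S_n$ that bi-avoid $(1\mbox{-}2,0~1)$ as an explicit sum, namely $b_n = \sum_{j=0}^{n} j!\binom{n}{j}^2$ (the formula already attributed to Simion), and then verifying that $\sum_{n \geq 0} b_n \frac{x^n}{n!} = \frac{e^{x/(1-x)}}{1-x}$. Since the statement promises an \emph{independent} derivation of the generating function, I would aim to produce the count $b_n$ directly from the combinatorial structure of bi-avoidance rather than quoting Simion.

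First I would analyze what bi-avoidance of $(1\mbox{-}2,0~1)$ means for $(\sg,w) \in C_2 \wr S_n$. A bi-occurrence of $(1\mbox{-}2,0~1)$ requires indices $i_1 < i_2$ with $\sg_{i_1} < \sg_{i_2}$ (the pattern $1\mbox{-}2$ in $\sg$) and $w_{i_1} < w_{i_2}$, which with colors in $\{0,1\}$ forces $w_{i_1}=0$ and $w_{i_2}=1$. So bi-avoidance says: there is no pair of positions, the earlier colored $0$ and the later colored $1$, whose underlying values are increasing. Equivalently, every $0$-colored value must be larger than every $1$-colored value that occurs later than it. I would separate the positions into the set $Z$ of $0$-colored positions and $O$ of $1$-colored positions. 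The key structural observation is that for each $1$-colored value, all $0$-colored values appearing to its left must exceed it; the cleanest way to count is to condition on the set of values receiving color $1$ and track how the ordering constraint restricts the arrangement of the two color-classes.

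The main obstacle is converting the avoidance condition into a clean product/summation that manifestly gives $\sum_j j!\binom{n}{j}^2$. I expect the right bookkeeping is: choose the $j$ positions to be colored $1$ (in $\binom{n}{j}$ ways) and the $j$ values to be assigned to those positions; the constraint that no later $1$ forms an ascent with an earlier $0$ should pin down the $0$-colored subword to be decreasing (or otherwise forced) while leaving the $1$-colored values free to be permuted in $j!$ ways, with a second $\binom{n}{j}$ accounting for the choice of which values are colored $1$. Establishing that exactly these choices are free — and that each such choice yields a unique bi-avoiding element — is the crux and requires a careful argument that the forced-decreasing $0$-block together with any arrangement of the $1$-block never creates a prohibited ascent.

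Once $b_n = \sum_{j=0}^{n} j!\binom{n}{j}^2$ is in hand, I would finish by a generating-function computation. Writing $\binom{n}{j}^2 = \binom{n}{j}\binom{n}{j}$ and using $j! \binom{n}{j} = \frac{n!}{(n-j)!}$, one gets $\frac{b_n}{n!} = \sum_{j=0}^n \frac{1}{(n-j)!}\binom{n}{j}$, and summing over $n$ factors as a Cauchy product: $\sum_n \frac{x^n}{n!}\sum_j \binom{n}{j}\frac{1}{(n-j)!}$ should separate into the product of $\sum_m \frac{x^m}{m!} = e^x$-type series against the series $\sum \frac{x^\ell}{(\ell!)}\cdots$, ultimately collapsing to $\frac{1}{1-x}e^{x/(1-x)}$. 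I would recognize $e^{x/(1-x)}$ as the EGF of the Laguerre-type numbers $\sum_j \binom{n}{j}\frac{n!}{j!}$ and confirm the extra factor $\frac{1}{1-x}$ via the convolution; this last step is routine and I would not grind through it here.
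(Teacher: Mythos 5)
There is a genuine gap, and it sits exactly at the step you defer as "the crux." Your structural reading of bi-avoidance is right (every $0$-colored value must exceed every $1$-colored value occurring later), but the bookkeeping you propose contradicts it: since a bi-occurrence of $(1\mbox{-}2,0~1)$ requires the two colors to be $0$ then $1$, there are \emph{no} constraints within a color class, so the $0$-colored subword is not forced to be decreasing, and dually the $1$-colored values are not free to be permuted in $j!$ ways (each $1$-colored value is constrained to be smaller than all $0$-colored values to its left). Concretely, $(1~2,0~0)\in C_2\wr S_2$ bi-avoids the pattern with an \emph{increasing} $0$-subword. Worse, the refinement by the number $j$ of $1$-colored entries does not match your proposed terms: for $n=2$ the bi-avoiders with $j=0,1,2$ ones number $2,3,2$ respectively, while $j!\binom{2}{j}^2$ gives $1,4,2$. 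So the identity $b_n=\sum_j j!\binom{n}{j}^2$ holds only in total, via a nontrivial bijection your sketch does not supply — indeed the paper's concluding remarks explicitly list finding such a combinatorial correspondence (with partial permutations, i.e.\ OEIS A002720) as an open problem, which is a strong hint that the "careful argument" you postpone cannot be the naive position/value/arrangement count.

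The paper sidesteps this entirely: it conditions on the position and color of the element $1$. If $1$ has color $1$ there is no restriction (giving $(n+1)A_n$); if $1$ has color $0$ in position $i+1$, everything to its right must be colored $0$ and may be arranged freely, yielding $n!A_i/i!$ and the recurrence $A_{n+1}=n!\sum_{i=0}^{n}A_i/i!+(n+1)A_n$, which converts to the ODE $A'(x)=\frac{2-x}{(1-x)^2}A(x)$ with $A_0=1$ and solves to $e^{x/(1-x)}/(1-x)$. Your final step — deducing the EGF from $b_n=\sum_j j!\binom{n}{j}^2$ via $\sum_{n\geq m}\binom{n}{m}x^n=x^m/(1-x)^{m+1}$ — is indeed routine and correct (note it is this identity, not a pure Cauchy product, that makes the sum collapse), so if you simply cited Simion's count you would have a valid, if not independent, proof; but as an independent derivation the proposal fails at the combinatorial decomposition. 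To repair it along your lines you would need either a genuine bijection to partial permutations or a recursive argument like the paper's.
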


\begin{proof} Let $A_n$ denote the number of $n$-permutations in $C_2 \wr S_n$ bi-avoiding the pattern (1-2,0~1).
If an $(n+1)$-permutation contains the element 1 colored by the
color 1, then there are no restrictions for placing this element,
thus giving $(n+1)A_n$ possibilities. On the other hand, if an
$(n+1)$-permutation $(\sg,w)$ contains the element 1 colored by the
color 0 in position $i+1$, then if $(\sg,w)$ is to bi-avoid
(1-2,0~1), then every element to the right of 1 must be colored with
color 0 and these elements can be arranged in any of $(n-i)!$ ways.
Moreover, it immediately follows that no instance of a bi-occurrence
of (1-2,0~1) exists where the first element is to the left of 1, and
the second is to the right of 1. Thus it follows that $(\sg,w)$
bi-avoids (1-2,0~1) if and only if there is no bi-occurrence of
(1-2,0~1) in $(\sg_1 \cdots \sg_i,w_1 \cdots w_i)$. Thus, in the
case where 1 colored by 0 and is in position $i+1$, we have
$(n-i)!{n \choose i}A_i=n!A_i/i!$ possibilities where the binomial
coefficient is responsible for choosing the elements to the left of
1, and placing the remaining elements  to the right of 1. To
summarize, we obtain
$$A_{n+1}=n!\sum_{i=0}^{n}\frac{A_i}{i!}+(n+1)A_n.$$
Multiplying both parts of the equation above by $x^n/n!$ and summing
over all $n\geq 1$, we have
$$-A_1+A'(x)=-A_0+A(x)/(1-x)+xA'(x)+A(x)-A_0$$
leading to the differential equation
$$A'(x)=\frac{2-x}{(1-x)^2}A(x)$$
with the initial condition $A_0=1$ as the empty permutation bi-avoids
(1-2,0~1). The solution to this differential equation is
$$A(x)=\frac{e^{\frac{x}{1-x}}}{1-x}.$$ \end{proof}

\section{Bi-avoidance for sets of patterns}

In this section, we shall prove a variety of results for the number
of elements of $C_k \wr S_n$ that bi-avoid certain sets of patterns
of length 2. For any set $\Upsilon \subseteq C_k \wr S_j$ such that
$red(u) =u$ for all $(\tau,u) \in \Upsilon$, we let
$Av_{n,k}^\Upsilon$ denote the number of elements of $C_k \wr S_n$
which bi-avoid $\Upsilon$.

We start with a few simple results on sets of patterns $\Upsilon$ where
the bi-avoidance of $\Upsilon$
forces certain natural conditions on the possible
sets of signs for elements of $C_k \wr S_n$.

\begin{theorem}\label{thm:signs}\ \\

\begin{enumerate}
\item If $\Upsilon_1 = \{(1\mbox{-}2,0~0),(2\mbox{-}1,0~0)\}$, then $Av_{n,k}^{\Upsilon_1} = \binom{k}{n} n! n!$ for all $n \geq 1$ and $k \geq 1$.

\item If $\Upsilon_2 = \{(1\mbox{-}2,1~0),(2\mbox{-}1,1~0)\}$, then $Av_{n,k}^{\Upsilon_2} = \binom{n+k-1}{n} n!$ for all $n \geq 1$ and $k \geq 1$.

\item If $\Upsilon_3 = \{(1\mbox{-}2,0~0),(1\mbox{-}2,1~0),(2\mbox{-}1,0~0), (2\mbox{-}1,1~0)\}$, then $Av_{n,k}^{\Upsilon_3} = \binom{k}{n} n!$ for all $n \geq 1$ and $k \geq 1$.

\item If $\Upsilon_4 = \{(1\mbox{-}2,0~1),(1\mbox{-}2,1~0),(2\mbox{-}1,0~1), (2\mbox{-}1,1~0)\}$, then $Av_{n,k}^{\Upsilon_3} = k n!$ for all $n \geq 1$ and $k \geq 1$.

\end{enumerate}
\end{theorem}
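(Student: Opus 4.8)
The plan is to treat all four parts uniformly via one observation: in each $\Upsilon_i$ the permutation patterns $1\mbox{-}2$ and $2\mbox{-}1$ occur \emph{paired} with a common two-letter color word $u$. Since for any two positions $i_1 < i_2$ exactly one of $\red{\sg_{i_1}\sg_{i_2}} = 12$ or $\red{\sg_{i_1}\sg_{i_2}} = 21$ holds, bi-avoiding both $(1\mbox{-}2,u)$ and $(2\mbox{-}1,u)$ is equivalent to the purely color-based requirement that no pair $i_1 < i_2$ satisfy $\red{w_{i_1} w_{i_2}} = u$. This removes every constraint on the underlying permutation $\sg$, so in all four cases the count factors as $(\text{number of admissible color words } w) \times n!$.

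First I would record the elementary dictionary for a single two-letter color condition, using that $\red{w_{i_1} w_{i_2}}$ equals $00$, $10$, or $01$ exactly when $w_{i_1} = w_{i_2}$, $w_{i_1} > w_{i_2}$, or $w_{i_1} < w_{i_2}$ respectively. Hence forbidding $00$ at every pair means $w$ has pairwise distinct entries; forbidding $10$ means $w$ is weakly increasing; and forbidding $01$ means $w$ is weakly decreasing.

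Each formula then drops out by a standard word count. For $\Upsilon_1$ the admissible $w$ are the injective length-$n$ words over a $k$-letter alphabet, numbering $k(k-1)\cdots(k-n+1) = \binom{k}{n} n!$, giving $\binom{k}{n} n! \cdot n!$. For $\Upsilon_2$ the admissible $w$ are weakly increasing, counted by $n$-multisets from $k$ letters, namely $\binom{n+k-1}{n}$, giving $\binom{n+k-1}{n} n!$. For $\Upsilon_3$ the $00$ and $10$ conditions intersect to force strictly increasing words, of which there are $\binom{k}{n}$, giving $\binom{k}{n} n!$. For $\Upsilon_4$ the $10$ and $01$ conditions intersect to force constant words $c^n$, of which there are $k$, giving $k\,n!$.

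I expect no genuine obstacle; the only points needing care are confirming that pairing $1\mbox{-}2$ with $2\mbox{-}1$ under the same color word truly eliminates all dependence on $\sg$, and then correctly identifying the word class cut out by each color condition (and by the intersections in parts~3 and~4). The remaining enumeration of words under injectivity or monotonicity is routine.
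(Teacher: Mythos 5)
Your proof is correct and follows essentially the same route as the paper's: reduce each case to a condition on the color word alone (pairwise distinct, weakly increasing, strictly increasing, or constant) and multiply the resulting word count by $n!$ free choices of $\sg$. Your explicit justification that pairing $1\mbox{-}2$ and $2\mbox{-}1$ with a common color word $u$ makes the condition purely color-based is exactly the step the paper dismisses as ``easy to see,'' so you have simply spelled out the same argument in more detail.
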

\begin{proof}

For (1), it is easy to see that $(\sg,w) \in C_k \wr S_n$ bi-avoids $\Upsilon_1$ if
and only if all the signs are pairwise distinct. Thus there are
$\binom{k}{n}$ ways to pick the $n$ signs and then you have $n!$ ways to arrange those $n$ signs and $n!$ ways to pick $\sg$.

For (2), it is easy to see that $(\sg,w) \in C_k \wr S_n$ bi-avoids
$\Upsilon_2$ if and only $0 \leq w_1 \leq \cdots \leq w_n \leq k-1$.
Thus there are $\binom{n+k-1}{n}$ ways to pick the $n$ signs in this
case and there are $n!$ ways to pick $\sg$.

For (3), it is easy to see that $(\sg,w) \in C_k \wr S_n$ bi-avoids
$\Upsilon_2$ if and only $0 \leq w_1 < \cdots <  w_n \leq k-1$. Thus
there are $\binom{k}{n}$ ways to pick the $n$ signs in this case and
there are $n!$ ways to pick $\sg$.

For (4), it is easy to see that $(\sg,w) \in C_k \wr S_n$ bi-avoids
$\Upsilon_2$ if and only $0 \leq w_1 = \cdots =  w_n \leq k-1$. Thus
there are $k$ ways to pick the $n$ signs in this case and there are
$n!$ ways to pick $\sg$.
\end{proof}

If $\Upsilon \subseteq S_j$ is a set of permutations of $S_j$, we
let
\begin{equation}\label{Avgf}
\phi^{\Upsilon}(x,t) = \sum_{n \geq 0}  \frac{t^n}{n!} Av_n^{\Upsilon}
\end{equation}
where $Av_n^{\Upsilon}$ is the number of permutations $\sg \in S_n$ such
that $\sg$ avoids $\Upsilon$. Similarly, if
$\Upsilon \subseteq C_k \wr S_j$ is such that for all
$(\sg,u) \in \Upsilon$, $red(u) =u$, then we
let
\begin{equation}\label{Avkgf}
\theta_k^{\Upsilon}(x,t) = \sum_{n \geq 0}  \frac{t^n}{n!} Av_{n,k}^{\Upsilon}.
\end{equation}

We let $\Upsilon_0 = \{(\sg,0^j): \sg \in \Upsilon\}$. Then we have
the following.

\begin{theorem}\label{thm:product}
 Let $\Upsilon \subseteq S_j$ be any set of permutations of $S_j$ and
$\Upsilon_0 = \{(\sg,0^j): \sg \in \Upsilon\}$. Then if
$\Gamma = \Upsilon_0 \cup \{ (1\mbox{-}2,1~0), (2\mbox{-}1,1~0)\}$,
 \begin{equation}\label{eq:product}
\theta_k^{\Gamma}(t) = (\phi^{\Upsilon}(t))^k
\end{equation}
for all $k \geq 1$.
\end{theorem}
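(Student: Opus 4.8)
The plan is to show that bi-avoidance of $\Gamma$ decouples the $k$ color classes, each of which independently contributes a $\Upsilon$-avoiding arrangement, so that the exponential generating function factors as a $k$-th power.

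First I would exploit the two patterns $\{(1\mbox{-}2,1~0),(2\mbox{-}1,1~0)\}$ that $\Gamma$ shares with $\Upsilon_2$ in Theorem~\ref{thm:signs}(2). Exactly as there, an element $(\sg,w)\in C_k\wr S_n$ bi-avoids these two patterns if and only if $0\le w_1\le\cdots\le w_n\le k-1$, so the color word is weakly increasing. Consequently the positions carrying a fixed color $c$ form a single contiguous block, and $w$ is completely determined by the composition $(i_1,\dots,i_k)$ recording how many positions receive each of the colors $0,1,\dots,k-1$, where $i_1+\cdots+i_k=n$ and each $i_j\ge 0$. Thus once the composition is fixed there is a unique admissible $w$, and the problem reduces to counting the admissible underlying permutations $\sg$.

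Next I would analyze bi-avoidance of $\Upsilon_0=\{(\tau,0^j):\tau\in\Upsilon\}$ under this constraint. The crucial observation is that $\red{w_{i_1}\cdots w_{i_j}}=0^j$ holds precisely when $w_{i_1}=\cdots=w_{i_j}$, i.e.\ when the selected positions all carry the same color. Since equal colors now means lying in the same contiguous block, a bi-occurrence of some $(\tau,0^j)$ can only use positions contained entirely in one color block, and there are no cross-block bi-occurrences. Hence $(\sg,w)$ bi-avoids $\Upsilon_0$ if and only if, for each color $c$, the reduction of the subword of $\sg$ on the block of color $c$ avoids $\Upsilon$ in the ordinary sense.

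Finally I would assemble the count. Fixing a composition $(i_1,\dots,i_k)$, one chooses which values of $\{1,\dots,n\}$ occupy each block in ${n\choose i_1,\dots,i_k}$ ways, and then arranges the $i_c$ chosen values inside the block of color $c-1$ in any $\Upsilon$-avoiding order, which can be done in $Av_{i_c}^{\Upsilon}$ ways since this number depends only on the block size. This gives
$$Av_{n,k}^{\Gamma}=\sum_{\substack{i_1+\cdots+i_k=n\\ i_1\ge 0,\dots,i_k\ge 0}}{n\choose i_1,\dots,i_k}\,Av_{i_1}^{\Upsilon}\cdots Av_{i_k}^{\Upsilon}.$$
Dividing by $n!$ turns the multinomial coefficient into $\prod_{c}1/i_c!$, so $Av_{n,k}^{\Gamma}/n!$ is exactly the coefficient of $t^n$ in $(\phi^{\Upsilon}(t))^k$, which is \eqref{eq:product}. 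The only step demanding care is the decoupling in the previous paragraph: I must verify both that weak monotonicity of $w$ forces each color class to be a contiguous interval and that the all-equal-color requirement built into $\Upsilon_0$ genuinely rules out every cross-block bi-occurrence, so that the within-block avoidance conditions are truly independent and the generating function factors.
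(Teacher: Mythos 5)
Your proof is correct and takes essentially the same route as the paper's: both deduce from the patterns $(1\mbox{-}2,1~0)$ and $(2\mbox{-}1,1~0)$ that the color word is weakly increasing, observe that any bi-occurrence of a pattern $(\tau,0^j)$ must lie entirely within a single constant color block, and arrive at the convolution $Av_{n,k}^{\Gamma}=\sum \binom{n}{i_1,\ldots,i_k} Av_{i_1}^{\Upsilon}\cdots Av_{i_k}^{\Upsilon}$, which after dividing by $n!$ gives the $k$-th power of the exponential generating function. Your explicit verification of the cross-block decoupling is a detail the paper leaves implicit, but the argument is the same.
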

\begin{proof}
It is easy to see that if $(\sg,u) \in C_k \wr S_n$ and $(\sg,u)$
bi-avoids $\{ (1\mbox{-}2,1~0), (2\mbox{-}1,1~0)\}$, then it must be
the case that $0\leq u_1 \leq u_2 \leq \cdots \leq u_n \leq k-1$.
Thus suppose that $w = 0^{a_1} 1^{a_2} \cdots (k-1)^{a_k}$ where
$a_1 + \cdots + a_k = n$ and $a_i\geq 0$ for $1\leq i\leq k$. Then
clearly the number of $\sg \in S_n$ such that $(\sg,w)$ bi-avoids
$\Upsilon_0$ is
$$\binom{n}{a_1,\ldots,a_k} Av^{\Upsilon}_{a_1} \cdots Av^{\Upsilon}_{a_k}.$$
That is, the binomial coefficient allows us to choose the elements
$C_i$ of $\{1, \ldots, n\}$ that correspond to the constant segment
$i^{a_{i+1}}$ in $w$. Then we only have to arrange the elements of
$C_i$ so that it avoids $\Upsilon$ which can be done in
 $Av^{\Upsilon}_{a_{i+1}}$ ways. Thus it follows
that
\begin{equation*}
Av^{\Gamma}_{n,k} = \sum_{a_1 + \cdots +a_k =n; a_i \geq 0}
\binom{n}{a_1,\ldots,a_k} Av^{\Upsilon}_{a_1} \cdots Av^{\Upsilon}_{a_k}.
\end{equation*}
or equivalently
\begin{equation*}
\frac{Av^{\Gamma}_{n,k}}{n!} =
\sum_{a_1 + \cdots +a_k =n; a_i \geq 0} \prod_{i=1}^k \frac{Av^{\Upsilon}_{a_i}}{a_i!}
\end{equation*}
which implies (\ref{eq:product}).
\end{proof}

We immediately have the following corollary.

\begin{corollary} For any $k \geq 1$, the number of elements of $C_k \wr S_n$ which bi-avoids \\
$\Gamma_1 = \{(1\mbox{-}2,0~0), (1\mbox{-}2,1~0), (2\mbox{-}1,1~0)\}$ or
$\Gamma_2 = \{(2\mbox{-}1,0~0), (1\mbox{-}2,1~0), (2\mbox{-}1,1~0)\}$ is
$k^n$.
\end{corollary}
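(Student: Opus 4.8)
The plan is to recognize both pattern sets as instances of the family already handled by Theorem~\ref{thm:product} and then simply read off the generating function. First I would record the decompositions
$$\Gamma_1 = \{(1\mbox{-}2,0~0)\} \cup \{(1\mbox{-}2,1~0),(2\mbox{-}1,1~0)\} \quad\text{and}\quad \Gamma_2 = \{(2\mbox{-}1,0~0)\} \cup \{(1\mbox{-}2,1~0),(2\mbox{-}1,1~0)\},$$
so that each is of the form $\Upsilon_0 \cup \{(1\mbox{-}2,1~0),(2\mbox{-}1,1~0)\}$ with $\Upsilon_0 = \{(\sg,0^2)\}$, where $\sg = 12$ in the case of $\Gamma_1$ and $\sg = 21$ in the case of $\Gamma_2$. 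In both cases the underlying set of permutation patterns is a singleton, namely $\Upsilon = \{12\}$ or $\Upsilon = \{21\}$, and the sign word $0~0$ satisfies $\red{0~0} = 0~0$, so the hypotheses of Theorem~\ref{thm:product} apply.

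Next I would compute $\phi^{\Upsilon}(t)$ for these two singletons. Avoiding the classical pattern $12$ means $\sg$ has no pair $i<j$ with $\sg_i < \sg_j$, i.e.\ $\sg$ is strictly decreasing, so $n(n-1)\cdots 1$ is the unique permutation of $S_n$ avoiding $12$; symmetrically, the increasing permutation $12\cdots n$ is the unique permutation of $S_n$ avoiding $21$. Hence in either case $Av_n^{\Upsilon} = 1$ for all $n \geq 0$, and therefore
$$\phi^{\Upsilon}(t) = \sum_{n \geq 0} \frac{t^n}{n!} = e^t.$$
Applying \eqref{eq:product} then gives, for $i = 1, 2$,
$$\theta_k^{\Gamma_i}(t) = \left(\phi^{\Upsilon}(t)\right)^k = \left(e^t\right)^k = e^{kt} = \sum_{n \geq 0} \frac{t^n}{n!}\, k^n,$$
and comparing the coefficient of $t^n/n!$ with the definition \eqref{Avkgf} yields $Av_{n,k}^{\Gamma_i} = k^n$, as claimed.

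There is essentially no obstacle in this argument: the whole content lies in the elementary observation that each $\Upsilon$ is a single pattern of length $2$ whose avoidance class in $S_n$ consists of exactly one permutation, which collapses $\phi^{\Upsilon}(t)$ to $e^t$. The only point requiring a little care is the purely bookkeeping verification that $\Gamma_1$ and $\Gamma_2$ really do fit the template $\Upsilon_0 \cup \{(1\mbox{-}2,1~0),(2\mbox{-}1,1~0)\}$ of Theorem~\ref{thm:product}, which is immediate from the decompositions above.
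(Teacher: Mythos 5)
Your proposal is correct and follows essentially the same route as the paper's own proof: both identify $\Gamma_1$ and $\Gamma_2$ as instances $\Upsilon_0 \cup \{(1\mbox{-}2,1~0),(2\mbox{-}1,1~0)\}$ of Theorem~\ref{thm:product} with $\Upsilon = \{1~2\}$ or $\{2~1\}$, observe $Av_n^{\Upsilon} = 1$ so that $\phi^{\Upsilon}(t) = e^t$, and read off $\theta_k^{\Gamma_i}(t) = e^{kt}$, hence $Av_{n,k}^{\Gamma_i} = k^n$. Your write-up just makes explicit the bookkeeping (the uniqueness of the decreasing/increasing avoider and the coefficient extraction) that the paper leaves implicit.
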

\begin{proof}
Let $\Upsilon_1 =\{1~2\}$ and  $\Upsilon_2 =\{2~1\}$.
Then clearly, $Av^{\Upsilon_i}_n =1$ for $i =1,2$ so that
$\phi_n^{\Upsilon_i}(t) = e^t$ for $i=1,2$. But then by Theorem \ref{thm:product},
$\theta_{n,k}^{\Gamma_i}(t) = (e^t)^k = e^{kt}$ so that
$Av_{n,k}^{\Gamma_i} = k^n$ for $i=1,2$.
\end{proof}

We can derive theorems analogous to Theorem \ref{thm:product}  for
the other sign conditions in Theorem \ref{thm:signs}. That is,
suppose that $\Upsilon$ is any set of patterns contained in $S_j$.
Then we let
$$\Upsilon_{i}= \{(\tau,0~1 \ldots (j-1)): \tau \in \Upsilon\}$$
and
$$\Upsilon_{d}= \{(\tau,u): \tau \in \Upsilon \ \& \ u \in D_j\}$$
where $D_j$ is the set of all permutations of $\{0,1, \ldots, j-1\}^*$. Then we have the following.

\begin{theorem} For any $\Upsilon \subset S_j$, let
\begin{eqnarray*}
\Gamma_1 &=& \Upsilon_0 \cup \{(1\mbox{-}2,0~1), (1\mbox{-}2,1~0),(2\mbox{-}1,0~1), (2\mbox{-}1,1~0)\}, \\
\Gamma_2 &=& \Upsilon_i \cup \{(1\mbox{-}2,1~0), (1\mbox{-}2,0~0),(2\mbox{-}1,1~0), (2\mbox{-}1,0~0)\}, \ \mbox{and} \\
\Gamma_3 &=& \Upsilon_{d} \cup \{(1\mbox{-}2,0~0),(2\mbox{-}1,0~0)\}. \\
\end{eqnarray*}
Then for all $k \geq 1$,
\begin{enumerate}
\item $Av_{n,k}^{\Gamma_1} = k Av_n^{\Upsilon}$ for all $n \geq 1$, \\

\item $Av_{n,k}^{\Gamma_2} = \binom{k}{n} Av_n^{\Upsilon}$ for all $n \geq 1$, and

\item $Av_{n,k}^{\Gamma_2} = \binom{k}{n} n! Av_n^{\Upsilon}$ for all
$n \geq 1$.
\end{enumerate}
\end{theorem}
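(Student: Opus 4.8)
The plan is to treat each of the three claims by the same two-stage argument: first use the sign-forcing patterns inside $\Gamma_\ell$ to pin down the structure of the color word $w$, and then show that, once $w$ is so constrained, bi-avoidance of the remaining $\Upsilon$-based patterns collapses to ordinary $\Upsilon$-avoidance of the underlying permutation $\sg$. Observe that the four extra patterns in $\Gamma_1$ are precisely the set $\Upsilon_4$ of Theorem~\ref{thm:signs}, the four extra patterns in $\Gamma_2$ are precisely $\Upsilon_3$, and the two extra patterns in $\Gamma_3$ are precisely $\Upsilon_1$. Hence by Theorem~\ref{thm:signs}, bi-avoiding those subsets forces, respectively, $w_1 = \cdots = w_n$ (a constant word), $w_1 < \cdots < w_n$ (a strictly increasing word), and $w_1, \ldots, w_n$ pairwise distinct.

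The key observation, which I would isolate as a single remark and reuse three times, is the following. Fix positions $i_1 < \cdots < i_j$. If $w$ is constant then $\red{w_{i_1} \cdots w_{i_j}} = 0^j$; if $w$ is strictly increasing then $\red{w_{i_1} \cdots w_{i_j}} = 0~1~\cdots~(j-1)$; and if the entries of $w$ are pairwise distinct then $\red{w_{i_1} \cdots w_{i_j}}$ is some permutation of $\{0, 1, \ldots, j-1\}$, i.e.\ some element of $D_j$. In each case the reduced color word lands in exactly the set of colorings attached to $\Upsilon$ in the corresponding definition ($0^j$ for $\Upsilon_0$, the identity for $\Upsilon_i$, and all of $D_j$ for $\Upsilon_d$). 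Consequently the word-side condition in the definition of a bi-occurrence is automatically satisfied whenever $\red{\sg_{i_1} \cdots \sg_{i_j}} = \tau$ for some $\tau \in \Upsilon$, so that $(\sg, w)$ bi-avoids the $\Upsilon$-based subset if and only if $\sg$ avoids $\Upsilon$ in $S_n$. Crucially, this equivalence does not depend on the actual arrangement of the colors, only on the structural type of $w$, so the choice of $w$ and the choice of $\sg$ decouple.

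With this in hand the counting is immediate. For $\Gamma_1$ there are $k$ constant words $w = c^n$ (one per color $c$), and for each the admissible $\sg$ are exactly the $Av_n^{\Upsilon}$ permutations avoiding $\Upsilon$, giving $Av_{n,k}^{\Gamma_1} = k\,Av_n^{\Upsilon}$. For $\Gamma_2$ there are $\binom{k}{n}$ strictly increasing words (one per $n$-subset of colors), yielding $Av_{n,k}^{\Gamma_2} = \binom{k}{n}\,Av_n^{\Upsilon}$. For $\Gamma_3$ there are $\binom{k}{n}$ ways to choose the set of $n$ distinct colors and $n!$ ways to arrange them into $w$, and for each such $w$ there are $Av_n^{\Upsilon}$ admissible $\sg$, giving $Av_{n,k}^{\Gamma_3} = \binom{k}{n}\,n!\,Av_n^{\Upsilon}$; here the third displayed formula in the statement, written with $\Gamma_2$, should read $\Gamma_3$.

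The only genuine point requiring care---and hence the main obstacle---is the second stage of the decoupling, namely verifying that the word-side constraint is truly vacuous once $w$ has the forced type. For $\Gamma_1$ and $\Gamma_2$ this is clear because the reduced word is a single fixed pattern, but for $\Gamma_3$ one must check that $\Upsilon_d$ contains $(\tau, u)$ for every $u \in D_j$, so that whatever permutation $\red{w_{i_1} \cdots w_{i_j}}$ happens to be, the pair $(\red{\sg_{i_1} \cdots \sg_{i_j}}, \red{w_{i_1} \cdots w_{i_j}})$ still lies in $\Upsilon_d$ whenever $\red{\sg_{i_1} \cdots \sg_{i_j}} \in \Upsilon$. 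This is exactly why $D_j$ is defined to be the full set of permutations of $\{0, 1, \ldots, j-1\}$, and it is what guarantees that no arrangement of distinct colors can accidentally evade a forbidden permutation pattern.
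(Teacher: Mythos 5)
Your proposal is correct and follows essentially the same route as the paper: the extra patterns force $w$ to be constant, strictly increasing, or pairwise distinct (the conditions of Theorem~\ref{thm:signs}), after which the reduced color word along any positions $i_1 < \cdots < i_j$ automatically lies in the coloring set attached to $\Upsilon_0$, $\Upsilon_i$, or $\Upsilon_d$ respectively, so bi-avoidance reduces to $\sg$ avoiding $\Upsilon$ and the counts $k$, $\binom{k}{n}$, and $\binom{k}{n}n!$ follow. You are also right that the third item's $\Gamma_2$ is a typo for $\Gamma_3$, which is what the paper's own proof computes.
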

\begin{proof}
For (1), note that for $(\sg,w) \in C_k \wr S_n$ to bi-avoid
$\{(1\mbox{-}2,0~1), (1\mbox{-}2,1~0),(2\mbox{-}1,0~1), (2\mbox{-}1,1~0)\}$,
$w = i^n$ for some $i \in \{0, \ldots, k-1\}$. Then
$(\sg,i^n) \in C_k \wr S_n$ bi-avoids  $\Upsilon_0$ if and only if
$\sg$ avoids $\Upsilon$. Thus $Av_{n,k}^{\Gamma_1} = k Av_n^{\Upsilon}$.

For (2), note that for $(\sg,w) \in C_k \wr S_n$ to bi-avoid
$\{(1\mbox{-}2,1~0), (1\mbox{-}2,0~0),(2\mbox{-}1,1~0),
(2\mbox{-}1,0~0)\}$, $w = w_1 \cdots w_n$ where $w_1 < \cdots <
w_n$. Then for any of the $\binom{k}{n}$ strictly increasing words
$w \in \{0,\ldots, k-1\}$, $(\sg,w) \in C_k \wr S_n$ bi-avoids
$\Upsilon_i$ if and only if $\sg$ avoids $\Upsilon$. Thus
$Av_{n,k}^{\Gamma_2} = \binom{k}{n} Av_n^{\Upsilon}$.

For (3), note that for $(\sg,w) \in C_k \wr S_n$ to bi-avoid
$\{(1\mbox{-}2,0~0),(2\mbox{-}1,0~0)\}$, $w = w_1 \cdots w_n$ where
the letters of $w$ are pairwise distinct. Then for any of the
$\binom{k}{n}n!$ words $w \in \{0,\ldots, k-1\}$ which have pairwise
distinct letters, $(\sg,w) \in C_k \wr S_n$ bi-avoids $\Upsilon_d$
if and only if $\sg$ avoids $\Upsilon$. Thus $Av_{n,k}^{\Gamma_3} =
\binom{k}{n} n! Av_n^{\Upsilon}$.
\end{proof}

Next we will prove two more results about $Av_{n,k}^\Upsilon$ for other sets of patterns $\Upsilon$ that contain $\{(1\mbox{-}2,1~0), (2\mbox{-}1,1~0)\}$

\begin{theorem}
Let
\begin{eqnarray*}
\Upsilon_1 &=& \{(1\mbox{-}2,0~1), (1\mbox{-}2,1~0), (2\mbox{-}1,1~0)\} \ \mbox{and} \\
\Upsilon_2 &=& \{(1\mbox{-}2,0~1), (1\mbox{-}2,1~0), (2\mbox{-}1,1~0),(2\mbox{-}1,0~0)\}.
\end{eqnarray*}
 Then
\begin{enumerate}
\item $Av_{n,k}^{\Upsilon_1} = \sum_{a_1+ \cdots + a_k =n, a_i \geq 0}
a_1! \cdots a_k!$ for all $n \geq 1$ and $k \geq 1$ and
\item $Av_{n,k}^{\Upsilon_1} = \binom{n+k-1}{k-1}$
for all $n \geq 1$ and $k \geq 1$.
\end{enumerate}
\end{theorem}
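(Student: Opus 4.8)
The plan is to translate the three patterns shared by $\Upsilon_1$ and $\Upsilon_2$ into order relations on the pairs $(\sg_i,w_i)$ and show they pin down a rigid block structure, after which both counts follow by elementary enumeration in the spirit of Theorems~\ref{thm:signs} and~\ref{thm:product}. Reading $(\sg,w)$ as the sequence $(\sg_1,w_1)\cdots(\sg_n,w_n)$, a bi-occurrence of $(1\mbox{-}2,0~1)$ is a pair $i<j$ with $\sg_i<\sg_j$ and $w_i<w_j$, one of $(1\mbox{-}2,1~0)$ has $\sg_i<\sg_j$ and $w_i>w_j$, and one of $(2\mbox{-}1,1~0)$ has $\sg_i>\sg_j$ and $w_i>w_j$. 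First I would record that bi-avoiding the first two forces $w_i=w_j$ whenever $\sg_i<\sg_j$, while bi-avoiding the third forces $w_i\leq w_j$ whenever $\sg_i>\sg_j$. Combining these rules out any pair $i<j$ with $w_i>w_j$: for such a pair $\sg_i<\sg_j$ would demand $w_i=w_j$ and $\sg_i>\sg_j$ would demand $w_i\leq w_j$, yet the distinct values $\sg_i,\sg_j$ must satisfy one of these two relations. Hence the color word is weakly increasing, $0\leq w_1\leq\cdots\leq w_n\leq k-1$.

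Next I would read off the structure of $\sg$. A weakly increasing $w$ splits the positions into consecutive color blocks $0^{b_0}1^{b_1}\cdots(k-1)^{b_{k-1}}$ with $b_0+\cdots+b_{k-1}=n$ and $b_c\geq 0$. For $i<j$ in distinct blocks we have $w_i<w_j$, and the surviving constraint from avoiding $(1\mbox{-}2,0~1)$ forbids $\sg_i<\sg_j$, so $\sg_i>\sg_j$; thus every value in an earlier block exceeds every value in a later block. The values assigned to each block are therefore forced (the color-$0$ block gets the largest $b_0$ values, the color-$1$ block the next $b_1$, and so on), so $\sg$ is determined by the block sizes together with a free internal ordering inside each block, since no pattern constrains a pair of equal color. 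For $\Upsilon_1$ each block of size $b_c$ can be filled in $b_c!$ ways, giving $Av_{n,k}^{\Upsilon_1}=\sum_{b_0+\cdots+b_{k-1}=n}b_0!\cdots b_{k-1}!$, which is statement~(1) upon setting $a_i=b_{i-1}$.

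For statement~(2), which I read as concerning $\Upsilon_2$, the three patterns of $\Upsilon_1$ are all present, so the weakly increasing coloring and the inter-block value reversal persist unchanged; the only new ingredient is bi-avoidance of $(2\mbox{-}1,0~0)$, i.e.\ no $i<j$ with $\sg_i>\sg_j$ and $w_i=w_j$. This forbids a descent within any single color block, forcing each block to be filled in increasing order and hence uniquely. Every admissible $(\sg,w)$ is then determined by its color word alone, and the number of weakly increasing words equals the number of compositions of $n$ into $k$ nonnegative parts, namely $\binom{n+k-1}{k-1}$. I expect the real content to sit entirely in the first paragraph, the clean deduction that $w$ must be weakly increasing and that values reverse across blocks; once that block decomposition is secured, parts (1) and (2) differ only in whether the internal filling of a block is free (contributing $b_c!$) or forced (contributing $1$), and the summation over compositions is routine.
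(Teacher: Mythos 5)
Your proposal is correct and follows essentially the same route as the paper: deduce that $w$ must be weakly increasing, show bi-avoidance of $(1\mbox{-}2,0~1)$ forces the values in earlier color blocks to exceed those in later blocks, and then count internal arrangements ($a_i!$ per block for $\Upsilon_1$, forced increasing order for $\Upsilon_2$, leaving only the $\binom{n+k-1}{k-1}$ compositions). You also correctly read part (2) as concerning $\Upsilon_2$, fixing the paper's typographical slip of writing $\Upsilon_1$ in both parts.
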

\begin{proof}
For (1), note that as in the proof of Theorem \ref{thm:product}, if
$(\sg,u) \in C_k \wr S_n$ and
$(\sg,u)$ bi-avoids $\{ (1\mbox{-}2,1~0), (2\mbox{-}1,1~0)\}$, then
it must be the case that $0\leq u_1 \leq u_2 \leq \cdots \leq u_n \leq k-1$.

Now suppose that $w = 0^{a_1} 1^{a_2} \cdots (k-1)^{a_k}$ where $a_1
+ \cdots + a_k = n$. Assume that $(\sg,w)$ also bi-avoids (1-2,0~1)
and $C_i$ is the set of elements of $\sg$ that correspond to the
signs $(i-1)^{a_i}$ in $w$. Then it follows that all the elements of
$C_1$ must be bigger than all the elements of $C_2$, all the
elements of $C_2$ must be bigger than all the elements of $C_3$,
etc.. Thus $C_1$ consists of the $a_1$ largest elements of
$\{1,\ldots,n\}$, $C_2$ consists of next $a_2$ largest elements of
$\{1,\ldots,n\}$, etc.. Then we can arrange the elements of $C_i$ in
any order in the positions corresponding to $(i-1)^{a_i}$ in $w$ to
produce a $(\sg,w)$ which bi-avoids $\Upsilon$.  Hence there are
$a_1! \cdots a_k!$ elements of the form $(\sg,w)$ which  bi-avoid
$\Upsilon$ in $C_k \wr S_n$. Thus (1) immediately follows.

For (2), observe that if in addition such $(\sg,w)$ also avoids
(2-1,0~0), then we must place the elements
of $C_i$ in increasing order. Hence $Av_{n,k}^{\Upsilon_2}$ is the
number of solutions of $a_1 + \cdots + a_k =n$ with $a_i \geq 0$ which
is well known to be $\binom{n+k-1}{k-1}$.
\end{proof}

\begin{theorem} Let $\Upsilon = \{(1\mbox{-}2,0~1),(1\mbox{-}2,1~0),
(2\mbox{-}1,0~0)\}$. Then for $n \geq 1$,
\begin{eqnarray*}
Av^{\Upsilon}_{n,1} &=& 1, \\
Av^{\Upsilon}_{n,2} &=& 2n, \ \mbox{and}\\
Av^{\Upsilon}_{n,k} &=& k + \sum{j=2}^{k-1} (k)\downarrow_j
\binom{n}{j} \ \mbox{for} \ k \geq 3,
\end{eqnarray*}
where $(k)\downarrow_0 =1$ and $(k)\downarrow_n = k(k-1) \cdots (k-n+1)$
for $n \geq 1$.
\end{theorem}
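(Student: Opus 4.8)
The plan is to first collapse the three avoidance conditions into a single rigid constraint tying the colouring $w$ to the permutation $\sigma$. Bi-avoiding $(1\mbox{-}2,0~1)$ and $(1\mbox{-}2,1~0)$ simultaneously says that whenever $i<j$ and $\sigma_i<\sigma_j$ one cannot have $w_i\neq w_j$, so every rise of $\sigma$ is monochromatic; bi-avoiding $(2\mbox{-}1,0~0)$ says that whenever $i<j$ and $w_i=w_j$ one cannot have $\sigma_i>\sigma_j$, so equal colours force a rise. Reading the two implications together, for every pair $i<j$ we get the biconditional $\sigma_i<\sigma_j \iff w_i=w_j$. Hence the relative order of the entries of $\sigma$ is completely dictated by $w$: positions sharing a colour carry increasing values, positions of distinct colours carry decreasing values (left to right). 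In particular, once $w$ is fixed there is at most one admissible $\sigma$, so $Av_{n,k}^{\Upsilon}$ equals the number of colourings $w$ for which a consistent $\sigma$ exists.

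The second step decides exactly which $w$ are consistent. The biconditional defines a tournament on the positions $\{1,\dots,n\}$ (orient $p\to q$ when the rule forces $\sigma_p<\sigma_q$), and an admissible $\sigma$ exists iff this tournament is transitive, in which case it is the unique linear extension. Since a tournament is transitive iff it contains no directed $3$-cycle, I would simply run through the colour-equality types of a triple $i<j<\ell$: the cases in which the three colours are all equal, all distinct, or equal only on an adjacent pair are all consistent, while $w_i=w_\ell\neq w_j$ forces $\sigma_i>\sigma_j$, $\sigma_j>\sigma_\ell$, $\sigma_\ell>\sigma_i$, a $3$-cycle. Thus the tournament is transitive precisely when no triple $i<j<\ell$ has $w_i=w_\ell\neq w_j$, which is exactly the statement that each colour occupies a contiguous block of positions.

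It then remains to count the colourings $w\in\{0,\dots,k-1\}^n$ in which every colour forms one contiguous run. Such a word is a concatenation $c_1^{b_1}\cdots c_m^{b_m}$ of $m$ maximal runs with $b_1+\cdots+b_m=n$, each $b_i\geq 1$, and pairwise distinct colours $c_1,\dots,c_m$ (the associated $\sigma$ being explicit: the first run takes the largest $b_1$ values in increasing order, the next run the following $b_2$ values, and so on). Choosing the run lengths is a composition of $n$ into $m$ parts, and choosing the ordered list of distinct colours is $(k)\downarrow_m$ ways, giving
$$Av_{n,k}^{\Upsilon}=\sum_{m=1}^{\min(n,k)} (k)\downarrow_m \binom{n-1}{m-1}.$$
Specializing recovers the claimed values: $k=1$ gives $1$; $k=2$ gives the $m=1$ and $m=2$ terms $2+2(n-1)=2n$; and for $k=3$ the $m=1$ term contributes $k=3$ while $(3)\downarrow_2=(3)\downarrow_3=6$ together with $\binom{n-1}{1}+\binom{n-1}{2}=\binom{n}{2}$ collapse the remaining terms to $6\binom{n}{2}$, matching $3+6\binom{n}{2}$.

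The one genuinely non-formal step is the second: justifying that ``each colour is an interval'' is both necessary and sufficient for a (necessarily unique) $\sigma$ to exist. I expect the cleanest route to be the no-$3$-cycle criterion for transitive tournaments, with the triple case-analysis isolating $w_i=w_\ell\neq w_j$ as the sole obstruction; granting that, the enumeration is routine. (I would also flag that writing the general answer with $\binom{n-1}{j-1}$, rather than $\binom{n}{j}$, is what the block count naturally produces, the two forms coinciding only through the Pascal collapse available at $k\leq 3$.)
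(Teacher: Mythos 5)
Your proof is correct, and it takes a genuinely different route from the paper's. The paper classifies elements by the number $s$ of entries following the maximum $n$ in $\sg$: bi-avoidance forces the prefix ending at $n$ to be increasing, monochromatic of some color $i$, and value-wise above the suffix, all of whose colors must differ from $i$; this yields the recursion $Av^{\Upsilon}_{n,k}=k+k\sum_{s=1}^{n-1}Av^{\Upsilon}_{s,k-1}$, followed by induction on $k$. You instead extract the global biconditional $\sg_i<\sg_j\iff w_i=w_j$, prove via the no-$3$-cycle criterion for transitive tournaments that a (necessarily unique) $\sg$ exists iff each color occupies a single contiguous run, the run colors then being pairwise distinct, and count the words directly. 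Your triple analysis is sound (the sole obstruction is indeed $w_i=w_\ell\neq w_j$), and the count $Av^{\Upsilon}_{n,k}=\sum_{m\geq 1}(k)\downarrow_m\binom{n-1}{m-1}$ is exactly what this structure gives.

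The caveat in your final paragraph is more than a stylistic remark: the paper's closed form for $k\geq 3$ is actually \emph{false} once $k\geq 4$, and your formula is the correct one. Your expression satisfies the paper's (correct) recursion, by the hockey-stick identity $\sum_{s=1}^{n-1}\binom{s-1}{m-1}=\binom{n-1}{m}$ together with $k\cdot(k-1)\downarrow_m=(k)\downarrow_{m+1}$, whereas the stated closed form does not satisfy it: in the paper's inductive step the term $\sum_{s=1}^{n-1}(k+1)k=(k+1)k(n-1)=(k+1)\downarrow_2\binom{n-1}{1}$ is incorrectly recorded as $(k+1)\downarrow_2\binom{n}{2}$ --- a pattern carried over from the $k=3$ base case, where the summand $Av^{\Upsilon}_{s,2}=2s$ genuinely produces $\binom{n}{2}$ under the hockey stick, but for larger $k$ the constant term $k$ of $Av^{\Upsilon}_{s,k}$ produces only a factor $n-1$. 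Concretely, at $n=3$, $k=4$, direct enumeration over $\sg\in S_3$ (only $123,231,312,321$ admit colorings, in $4,12,12,24$ ways respectively) gives $52$, matching both your formula, $4+12\binom{2}{1}+24\binom{2}{2}=52$, and the recursion, $4+4(3+9)=52$, while the theorem's formula yields $4+12\binom{3}{2}+24\binom{3}{3}=64$. (The stated sum also stops at $j=k-1$, dropping the $m=k$ block term, which is nonzero once $n\geq k$.) For $k\leq 3$ the two expressions coincide by the Pascal collapse you noted, which is why the $k=1,2,3$ values in the statement are nonetheless correct. So your argument not only proves the low-$k$ claims but corrects the general formula.
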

\begin{proof}
We shall classify the elements $(\sg,w) \in C_k \wr S_n$ which
bi-avoid $\Upsilon$ by the number of elements $s$ which follow $n$
in $\sg$. Now if $s=0$ so that $\sg$ ends in $n$, the fact that
$(\sg,w)$ bi-avoids both (1-2,0~1) and (1-2,1~0) means that all the
signs must be the same. That is, $w$ must be of the form $i^n$ for
some $i \in [k]$. But since $(\sg,w)$ must also bi-avoid (2-1,0~0),
$\sg = 1~2 \cdots (n-1)~n$ must be the identity. Thus there are $k$
choices for such $(\sg,w)$.

Now suppose there are $s$ elements following $n$ in $\sg$. Then again
the fact that $(\sg,w)$ bi-avoids both (1-2,0~1) and (1-2,1~0) means that all
the signs in $w$ corresponding to
 $\sg_1 \cdots \sg_{n-s} =n$ must be the same, say that sign is $i$. The fact
that $(\sg,w)$ avoids (2-1,0~0) means that (i) $\sg_1 \cdots
\sg_{n-s}$ must be in increasing order and (ii) all the signs
corresponding to $\sg_{n-s+1} \ldots \sg_n$ must be different from
$i$.  But then it follows from the fact that $(\sg,w)$ avoids both
(1-2,0~1) and (1-2,1~0) that all the elements in $\sg_1 \ldots
\sg_{n-s}$ must be greater than all the elements in $\sg_{n-s+1}
\cdots \sg_n$. Hence there are $k Av_{s,k-1}^{\Upsilon}$ such
elements if $k \geq 2$ and there are no such elements if $k =1$.
Thus it follows that $Av_{n,1}^{\Upsilon} = 1$ since $(1~2~\cdots
n,0^n)$ is the only element of $C_k \wr S_n$ which bi-avoids
$\Upsilon$.  For $k \geq 2$, we have
\begin{equation}\label{Urec}
Av_{n,k}^{\Upsilon} = k + \sum_{s=1}^{n-1} k Av_{s,k-1}^{\Upsilon}.
\end{equation}

In the case $n=2$, (\ref{Urec}) becomes
\begin{equation}\label{Urec2}
Av_{n,2}^{\Upsilon} = 2 + \sum_{s=1}^{n-1} 2  = 2n.
\end{equation}
and in the case $k=3$,
(\ref{Urec}) becomes
\begin{equation}\label{Urec3}
Av_{n,2}^{\Upsilon} = 3 + \sum_{s=1}^{n-1}3( 2s)  = 3+(3\cdot 2)\binom{n}{2}.
\end{equation}
In general, assuming that
\begin{equation*}
Av^{\Upsilon}_{n,k} = k + \sum_{j=2}^{k-1} (k)\downarrow_j \binom{n}{j},
\end{equation*}
it follows that
\begin{eqnarray*}
Av^{\Upsilon}_{n,k+1} &=& k +1 + \sum_{s=1}^{n-1} (k+1) Av^{\Upsilon}_{s,k} \\
&=& k+1 +  \sum_{s=1}^{n-1} (k+1) \left(k +\sum_{j=2}^{k-1} (k)\downarrow_j \binom{s}{j}\right) \\
&=& k+1 + \left(\sum_{s=1}^{n-1} (k+1)k\right) + \sum_{j=2}^{k-1} (k+1) (k) \sum_{s=1}^{n-1} \binom{s}{j} \\
&=& k+1 + (k+1)\downarrow_2 \binom{n}{2} + \sum_{j=3}^k (k+1)\downarrow_j
\binom{n}{j}.
\end{eqnarray*}
\end{proof}

We next consider simultaneous bi-avoidance of the patterns (1-2, 1~0)
and (1-2, 0~1).

\begin{theorem} Let $\Upsilon = \{(1\mbox{-}2, 1~0), (1\mbox{-}2, 0~1)\}$
and let $A_k^{\Upsilon}(t) = \sum_{n \geq
0}Av_{n,k}^{\Upsilon}t^n$ and $C(t) = \sum_{n \geq1}n!t^n$. Then
\begin{equation}\label{Uordinarygf}
A_k^{\Upsilon}(t) = \frac{1+C(t)}{1-(k-1)C(t)}.
\end{equation}
\begin{proof}
Fix $k$ and suppose that $(\sg,w)$ is an element of $C_k \wr S_n$
which bi-avoids both (1-2, 1~0) and (1-2, 0~1). Now if $w = i^n$ is
constant, then clearly $\sg$ can be arbitrary so that there are $n!$
such elements.

Next assume that $w$ is not constant so there is an $s \geq 2$ such
that $w = i_1^{a_1} i_2^{a_2} \cdots i_s^{a_s}$ where $a_r \geq 1$
for $r =1, \ldots, s$ and $i_r \neq i_{r+1}$ for $r =1, \ldots,
s-1$. We claim that $\sg_1 \cdots \sg_{a_1}$ must be the $a_1$
largest elements of $\{1,\ldots,n\}$. If not, then let $n-r$ be the
largest element which is not in $\sg_1 \cdots \sg_s$. Thus $r <
a_1-1$ and there must be at least one $\sg_t$ with $t \leq s$ such
that $\sg_t < n-r$. Now it cannot be that $\sg_{a_1+1} =n-r$ since
otherwise $(\sg_t~\sg_{a_1+1},i_1~i_2)$ would be an occurrence of
either (1-2,1~0) or (1-2,0~1). Hence it must be the case that
$\sg_{a_1+1} < n-r$ and $\sg_p =n-r$ for some $p > a_1+1$.  But then
no matter what color we choose for $w_p$, either
$(\sg_t~\sg_{p},i_1~w_p)$ or $(\sg_{a_1+1}~\sg_{p},i_2~w_p)$ would
be an occurrence of either (1-2,1~0) or (1-2,0~1). We can continue
this reasoning to show that for any $1 \leq p < q \leq s$, the
elements of $\sg$ corresponding to the block $i_p^{a_p}$ in $w$ must
be strictly larger than the elements of $\sg$ corresponding to the
block $i_q^{a_q}$  in $w$. This given, it follows that we can
arrange the elements of $\sg$ corresponding to a block $i_p^{a_p}$
in $w$ in any way that we want and we will always produce a pair
$(\sg,w)$ that bi-avoids  both (1-2,1~0) and (1-2,0~1). Thus for
such a $w$, we have $k(k-1)^{s-1}$ ways to choose the colors $i_1,
\ldots, i_s$ and $a_1! a_2! \cdots a_k!$ ways to choose the
permutation $\sg$. It follows that
\begin{equation}
Av_{n,k}^{\Upsilon} = k~n! + \sum_{s=2}^{n-1} \sum_{a_1 + \cdots + a_s =n, a_i > 0} k(k-1)^{s-1} a_1! a_2! \cdots a_s!
\end{equation}
which is equivalent to (\ref{Uordinarygf}).
\end{proof}
\end{theorem}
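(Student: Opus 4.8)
The plan is to derive an explicit recurrence for $Av_{n,k}^{\Upsilon}$ by classifying each bi-avoiding pair $(\sg,w)$ according to the structure of its color word $w$, and then to repackage that recurrence as an ordinary generating function identity. First I would settle the structural lemma that governs these objects: for a pair $(\sg,w)$ that bi-avoids both $(1\mbox{-}2,1~0)$ and $(1\mbox{-}2,0~1)$, if the color word is written in its block form $w = i_1^{a_1} i_2^{a_2} \cdots i_s^{a_s}$ with $a_r \geq 1$ and consecutive colors distinct, then the underlying values of $\sg$ are forced to respect the blocks in reverse size order, i.e. the $a_1$ positions of the first block carry the $a_1$ largest values, the next $a_2$ positions carry the next largest, and so on. This is the heart of the argument and the step I expect to be the main obstacle, since it must rule out every way a later, larger value could be paired with an earlier color to create a forbidden length-2 configuration.

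To prove the structural claim I would argue by contradiction exactly along the lines the problem already suggests for length-2 patterns: assume some value violating the block order appears, take the largest such offending value $n-r$, and show that no matter which color sits above it, one of the two patterns $(1\mbox{-}2,1~0)$ or $(1\mbox{-}2,0~1)$ is created together with an earlier position carrying a smaller value. The key point is that since $i_1 \neq i_2$, a smaller value appearing earlier in the first block together with a larger value later produces a rise $1\mbox{-}2$ whose two colors are either equal-then-different or different, and in each case one of the two forbidden colorings $1~0$ or $0~1$ is matched. Iterating this from the top value downward forces the full reverse-block ordering of values.

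Once the structure is established, the enumeration is a direct count. The color word $w$ is determined by the choice of $s$, the block sizes $a_1,\dots,a_s$ summing to $n$ with each $a_r \geq 1$, and the block colors; there are $k(k-1)^{s-1}$ admissible color sequences since the first color is free and each subsequent color merely differs from its predecessor. Given the word, the value assignment to blocks is forced by the structural lemma, but within each block the $a_r$ values may be permuted freely, contributing $a_1! a_2! \cdots a_s!$ arrangements. Separating the constant case $s=1$, which contributes $k\, n!$, I would obtain
$$
Av_{n,k}^{\Upsilon} = k\,n! + \sum_{s=2}^{n-1} \sum_{\substack{a_1 + \cdots + a_s = n \\ a_i > 0}} k(k-1)^{s-1}\, a_1! \cdots a_s!.
$$

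Finally I would convert this to the generating function identity. With $C(t) = \sum_{n \geq 1} n!\, t^n$, the coefficient of $t^n$ in $C(t)^s$ is precisely the compositional sum $\sum_{a_1 + \cdots + a_s = n,\, a_i > 0} a_1! \cdots a_s!$, so each block contributes a factor of $C(t)$. Writing $A_k^{\Upsilon}(t) = 1 + \sum_{n \geq 1} Av_{n,k}^{\Upsilon} t^n$ and summing over $s \geq 1$ gives $A_k^{\Upsilon}(t) = 1 + \sum_{s \geq 1} k(k-1)^{s-1} C(t)^s$ (after noting that the upper limits on the finite sums extend harmlessly to a full geometric series at the level of power series). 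Summing the geometric series in $C(t)$ yields $1 + \frac{k\,C(t)}{1-(k-1)C(t)}$, which simplifies to $\frac{1+C(t)}{1-(k-1)C(t)}$, establishing (\ref{Uordinarygf}). The only care needed here is bookkeeping the constant term and confirming that the boundary cases $s=1$ and small $n$ are absorbed consistently into the geometric series.
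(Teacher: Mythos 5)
Your proposal is correct and takes essentially the same route as the paper's proof: the same split into constant versus non-constant color words, the same structural lemma (established by the same largest-offending-value contradiction, using that adjacent block colors differ) forcing the values of earlier blocks to exceed those of later blocks, and the same count of $k(k-1)^{s-1}a_1!\cdots a_s!$ over compositions. Your explicit summation $1+\sum_{s\ge 1}k(k-1)^{s-1}C(t)^s=\frac{1+C(t)}{1-(k-1)C(t)}$ merely spells out the step the paper compresses into ``which is equivalent to'' the stated generating function identity.
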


Finally, we end this section by considering the number of
elements of $C_k \wr S_n$ which bi-avoid both (1-2,0~0) and (1-2,0~1).
In this case, we only have a result for the case when $k=2$.

\begin{theorem}\label{cat} The number of permutations in $C_2 \wr S_n$
simultaneously bi-avoiding (1-2,0~0) and (1-2,0~1) is given by the
$(n+1)$-th Catalan number $C_{n+1}=\frac{1}{n+2}{2n+2 \choose n+1}$.
\end{theorem}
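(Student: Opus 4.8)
The plan is to convert the two forbidden patterns into a single structural condition on $(\sigma,w)$ and then count via a refined recursion. Writing $(\sigma,w)\in C_2\wr S_n$ as the sequence of pairs $(\sigma_1,w_1)\cdots(\sigma_n,w_n)$ with $w_i\in\{0,1\}$, the first step is the observation that $(\sigma,w)$ bi-avoids both (1-2,0~0) and (1-2,0~1) precisely when every pair of positions $i<j$ with $\sigma_i<\sigma_j$ satisfies $(w_i,w_j)=(1,0)$: a bi-occurrence of (1-2,0~0) is a pair $i<j$ with $\sigma_i<\sigma_j$ and $w_i=w_j$, while a bi-occurrence of (1-2,0~1) is such a pair with $w_i<w_j$, so forbidding both leaves only $w_i>w_j$. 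From this I would extract the characterization that drives the whole argument: $(\sigma,w)$ is admissible iff (a) every position colored $0$ is a right-to-left maximum of $\sigma$ (that is, $\sigma_i>\sigma_j$ for all $j>i$), and (b) the positions colored $1$ form a decreasing subsequence. Indeed, a $0$ with a larger entry to its right would create an ascent not of type $(1,0)$, forcing (a), and two $1$'s in increasing order would create an ascent of type $(1,1)$, forcing (b); conversely (a) and (b) rule out every forbidden ascent.

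Let $B_n$ denote the desired count, so that $B_0=1$, $B_1=2$, $B_2=5$. Deleting the last letter does not give a closed recursion directly, because the value occupying the last position is coupled to which values are forced to be colored $1$ elsewhere; resolving this coupling is the main obstacle. I would overcome it by passing to a refined statistic: let $g_n^{(m)}$ be the number of admissible $(\sigma,w)$ on the value set $\{1,\dots,n\}$ in which the $m$ smallest values $1,\dots,m$ are all colored $1$, so that $g_n^{(0)}=B_n$.

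Two facts about the last letter $(\sigma_n,w_n)$, both immediate from the characterization, set up the recursion. If $w_n=1$, then $\sigma_n$ must be the global minimum $1$: any smaller-valued entry would be either a $0$ that is not a right-to-left maximum, or a $1$ lying below the minimal $1$-value, both impossible. If $w_n=0$ with value $v$, then every value below $v$ must be colored $1$ (a smaller $0$-entry would again fail to be a right-to-left maximum), which forces $v>m$. In each case, deleting the last letter leaves an admissible configuration with a suitable set of forced $1$'s, and the move is reversible by appending the letter back; this yields, for $m\ge 1$,
\[
g_n^{(m)} = g_{n-1}^{(m-1)} + \sum_{j=m}^{n-1} g_{n-1}^{(j)},
\]
together with the boundary relation $g_n^{(0)}-g_n^{(1)}=g_{n-1}^{(0)}$, whose right side counts the case $v=1$ (the global minimum colored $0$, which forces it into the last position).

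Finally I would solve the recursion by identifying $g_n^{(m)}$ with the ballot number $T(n+1,n-m)$, where $T(a,b)=\binom{a+b}{b}-\binom{a+b}{b-1}$. Induction on $n$ reduces the displayed recursion to the telescoping identity $T(a,k)=\sum_{b=0}^{k}T(a-1,b)$, equivalently the standard ballot recurrence $T(a,k)-T(a,k-1)=T(a-1,k)$, and the boundary relation matches $T(n+1,n)=T(n+1,n-1)+T(n,n-1)$ using $T(n,n)=T(n,n-1)$. Taking $m=0$ then gives $B_n=g_n^{(0)}=T(n+1,n)=\binom{2n+1}{n}-\binom{2n+1}{n-1}$, and a one-line Pascal computation rewrites this as $\binom{2n+2}{n+1}-\binom{2n+2}{n}=C_{n+1}$. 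I expect the delicate points to be only (i) proving the last-letter dichotomy cleanly from the characterization and (ii) handling the $m=0$ boundary of the ballot recursion; the rest is routine. The same argument can be repackaged bijectively, sending an admissible $(\sigma,w)$ to the lattice path that records, letter by letter from the right, which of the two deletion moves is applied, thereby realizing the count $C_{n+1}$ as the number of Dyck paths of semilength $n+1$.
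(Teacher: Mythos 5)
Your proof is correct, but it takes a genuinely different route from the paper's. Your reduction of the two prohibitions to the single condition that every ascent pair $i<j$, $\sigma_i<\sigma_j$ carries colors $(w_i,w_j)=(1,0)$ --- equivalently, that $0$-colored entries are right-to-left maxima and $1$-colored entries form a decreasing subsequence --- is exactly right, and both halves of your last-letter dichotomy are sound: a final letter colored $1$ must hold the value $1$, and a final letter colored $0$ with value $v$ forces all values below $v$ to be colored $1$, whence $v>m$. Both deletion moves are reversible (appending $(1,1)$, respectively $(v,0)$ after shifting values, preserves admissibility because every previously $0$-colored entry has value exceeding the appended one), so the recursion $g_n^{(m)}=g_{n-1}^{(m-1)}+\sum_{j=m}^{n-1}g_{n-1}^{(j)}$ for $m\geq 1$ and the boundary relation $g_n^{(0)}-g_n^{(1)}=g_{n-1}^{(0)}$ both hold; the ballot-number solution $g_n^{(m)}=T(n+1,n-m)$ then goes through by induction using the hockey-stick identity $\sum_{b=0}^{k}\bigl(\binom{a-1+b}{b}-\binom{a-1+b}{b-1}\bigr)=\binom{a+k}{k}-\binom{a+k}{k-1}$, and I checked the small cases $g_2^{(0)}=5$, $g_2^{(1)}=3$, $g_2^{(2)}=1$ against $T(3,2),T(3,1),T(3,0)$, with $T(n+1,n)=\binom{2n+1}{n}-\binom{2n+1}{n-1}=C_{n+1}$ as you state. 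The paper argues instead by an explicit bijection with Dyck paths of semilength $n+1$: it observes that $\sigma$ must avoid 1-2-3 (the same shuffle-of-two-decreasing-sequences structure underlying your characterization), decomposes $\sigma$ into reverse irreducible components, shows that each singleton component carries a free color while all colors inside non-singleton components are forced (left-to-right minima receive $1$, right-to-left maxima receive $0$), and encodes $(\sigma,w)$ as a lattice path in the matrix diagram, recording a singleton's color by whether the path travels around it clockwise or counterclockwise. The paper's bijection makes the Catalan structure transparent at a glance, but its path construction requires geometric bookkeeping; your recursion is easier to verify line by line and yields as a bonus the Catalan-triangle refinement $g_n^{(m)}=T(n+1,n-m)$ by the statistic $m$ (the number of smallest values forced to color $1$), which the paper's argument does not provide. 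Your closing remark --- recording the two deletion moves as lattice-path steps --- would, if developed, recover a bijection close in spirit to the paper's.
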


\begin{proof} We prove the statement by establishing a bijection between the objects in question of length $n$ and
the {\em Dyck paths} of semi-length $(n+1)$ known to be counted by
the Catalan numbers (A Dyck path of semi-length $n$ is a lattice
path from (0,0) to $(2n,0)$ with steps (1,1) and $(1,-1)$ that never
goes below $x$-axis).

Suppose $(\sg,w)\in C_2\wr S_n$. Note that $\sg$ must bi-avoid the
pattern 1-2-3, as in an occurrence of such pattern in $\sg$, there
are two letters of the same color leading to an occurrence of
(1-2,0~0). Thus, the structure of $\sg$, as it is well-known, is two
decreasing sequences shuffled.

Subdivide $\sg$ into so called {\em reverse irreducible components}.
A reverse irreducible component is a factor $F$ of $\sg$ of minimal
length such that everything to the left (resp. right) of $F$ is
greater (resp. smaller) than any element of $F$. For example, the
subdivision of $\sg=6574312$ is $\sg=657-4-3-12$. The blocks of size
1 are {\em singletons}. In the example above, 4 and 3 are
singletons. This is easy to see, that any singleton element in $\sg$
can have any color (either 0 or 1). We will now show that the color
of each element of a non-singleton block is uniquely determined.

Indeed, irreducibility of a single block means that two decreasing
sequences in the structure of (1-2-3)-avoiding permutations are the
block's sequence of left-to-right minima and the block's sequence of
right-to-left maxima which do {\em not} overlap. Thus, for any
left-to-right minimum element $x$ (except possibly the last
element), one has an element $y$ greater than $x$ to the right of it
inside the same block and vice versa, from which we conclude that
$x$ must receive color 1, whereas $y$ must receive color 0
(otherwise a prohibited pattern will bi-occur).

We are ready to describe our bijection. For a given $(\sg,w)\in
C_2\wr S_n$, consider the matrix representation of $\sg$, that is an
integer grid with the opposite corners in (0,0) and $(n,n)$, and
with a dot in position $(i-\frac{1}{2},\sg_i-\frac{1}{2})$ for
$i=1,2,\ldots,n$. We will give a description of a path $P$
(corresponding to $(\sg,w)$) from $(0,n+1)$ to $(n+1,0)$ involving
only steps $(0,-1)$ and (1,0) that never goes above the line
$y=-x+n+1$. Clearly, $P$ can be transformed to a Dyck path of length
$(n+1)$ by taking a mirror image with respect to the line
$y=-x+n+1$, rotating 45 degrees counterclockwise, and making a
parallel shift.

To build $P$, set $i:=0$ and $j:=n+1$, and do the following steps
letting $P$ begin at $(i,j)$. Clearly, each reverse irreducible
block of $\sg$ defines a square on the grid which is the matrix
representation of the block. We call the reverse irreducible block
of $\sg$ with the left-top corner at $(i,j)$ the {\em current
block}.

\begin{itemize}
\item[Step 1.] If the current block is {\em not} a
singleton, go to Step 2. If the color of the element with
$x$-coordinate equal $i+\frac{1}{2}$ is $0$ (resp. 1) travel around
the current block counterclockwise (resp. clockwise) to get to the
point $(i+1,j-1)$. Note that $P$ touches the line $y=-x+n+1$ if the
color is 1. Set $i:=i+1$ and $j:=j-1$, and proceed with Step 3.

\item[Step 2.]
In Step 2 we follow a standard bijection between (1-2-3)-avoiding
permutations and Dyck paths that can be described as follows. Let
$(k,\ell)$ be the point of the current block opposite to $(i,j)$.
Start going down from $(i,j)$ until the $y$-coordinate of the
current node gets $\frac{1}{2}$ less than the $y$-coordinate of the
dot with $x$-coordinate equal $i+\frac{1}{2}$. Start moving
horizontally to the right and go as long as possible making sure
that none of the dots are below the part of $P$ constructed so far
and $i\leq k$ and $j\leq \ell$. Suppose $(i_1,j_1)$ is the last
point the procedure above can be done (that is, we were traveling on
the line $y=j_1$ and either $i_1=k$ and $j_1=\ell$ or there is a dot
with $x$-coordinate $i_1+\frac{1}{2}$ having $y$-coordinate less
than $j_1$). If $i:=k$ and $j:=\ell$, proceed with Step 3; otherwise
set $i:=i_1$ and $j:=j_1$ and go to Step 2. Note that in Step 2, $P$
never touches the line $y=-x+n+1$.

\item[Step 3.] If $j=0$, make as many as it takes horizontal steps to get to the point $(n+1,0)$ and terminate; otherwise go to Step 1.
\end{itemize}

Returning to our example, $\sg = (6~5~7~4~3~1~2, 1~1~0~1~0~1~0)$, we
have given the matrix diagram and outlined the reverse irreducible
blocks in Figure~\ref{bij2}.
We start our path at $(0,8)$. We travel down until we reach $(0,6)$,
when we are $\frac{1}{2}$ less than the y-coordinate of our first
point $(\frac{1}{2}, 6\frac{1}{2})$. We then continue traveling
right and down as described in Step 2. We travel clockwise around
our singleton colored $1$, and counterclockwise around our singleton
colored $0$. Then we continue to the final reverse irreducible block
and finish our path, given in Figure~\ref{bij3}. The resulting path
is presented in Figure~\ref{bij4}.

%

\begin{figure}[htp]
\centering
\includegraphics[width=100pt]{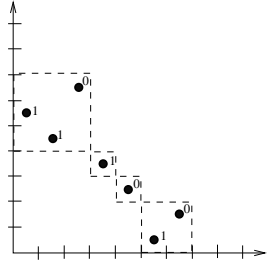}\\
\caption{Matrix representation of $\sg = (6~5~7~4~3~1~2,
1~1~0~1~0~1~0)$ with reverse irreducible blocks
outlined.}\label{bij2}
\end{figure}

\begin{figure}[htp]
\centering
\includegraphics[width=100pt]{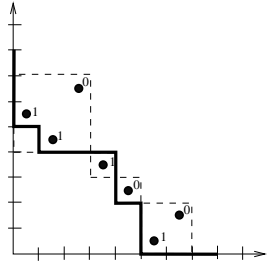}\\
\caption{The Dyck path corresponding to $\sg = (6~5~7~4~3~1~2,
1~1~0~1~0~1~0)$ added to Figure \ref{bij2}.}\label{bij3}
\end{figure}

\begin{figure}[htp]
\centering
\includegraphics[width=100pt]{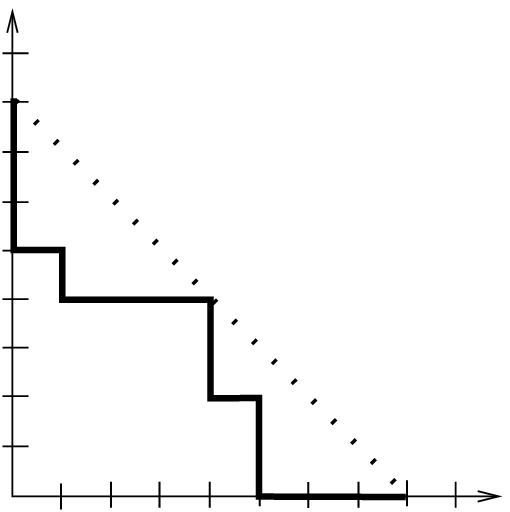}\\
\caption{The Dyck path corresponding to $\sg = (6~5~7~4~3~1~2,
1~1~0~1~0~1~0)$. Note the path only touches the line $y = -x + n +
1$ after a singleton colored $1$.}\label{bij4}
\end{figure}
\end{proof}

\section{Concluding remarks}

A natural way to extend work in this paper is to consider
bi-occurrences of patterns in $C_k \wr S_n$ of length more than 2.
However, there are questions left regarding the pattern (1-2,0~1),
which are listed below starting from the most ambitious one:

\begin{itemize}
\item Find distribution of (1-2,0~1) on $C_k \wr S_n$.
\item For $k\geq 3$, find the number of permutations in $C_k \wr
S_n$ bi-avoiding (1-2,0~1). The number of permutations in $C_k \wr
S_n$ bi-avoiding (1-2,0~1) for initial values of $k$ and $n$ are as
follows:
$$\begin{array}{ll}
k=3: & 3, 15, 101, 842, 8302, \ldots \\
k=4: & 4, 26, 224, 2361, \ldots \\
k=5: & 5, 40, 420, 5355, \ldots.
\end{array}$$
\item OEIS~\cite[A002720]{Sloane} suggests that the number of permutations in $C_2 \wr S_n$ bi-avoiding (1-2,0~1)
(for initial values of $n$ these numbers are 2, 7, 34, 209, 1546,
13327,$\ldots$) is the same as the number of
\begin{itemize}
\item partial permutations of an $n$-set;
\item $n \times n$ binary matrices with at most one 1 in each row
and column; \item matchings in the bipartite graph $K(n,n)$.
\end{itemize}
It would be interesting to find combinatorial proofs for the
conjectures above.
\end{itemize}

\end{document}